\long\def\alert#1{\parindent2em\smallskip\hbox to\hsize
{\hskip\parindent\vrule
\vbox{\advance\hsize-2\parindent\hrule\smallskip\parindent.4\parindent
\narrower\noindent#1\smallskip\hrule}\vrule\hfill}\smallskip\parindent0pt}
 \newtheorem{thm}{Theorem}[section]
\newtheorem{cor}[thm]{Corollary}
 \newtheorem{lem}[thm]{Lemma}
 \newtheorem{prop}[thm]{Proposition}
\theoremstyle{definition}
 \newtheorem{defn}[thm]{Definition}
\theoremstyle{remark}
 \numberwithin{equation}{section}
\begin{document}

\title[Nilpotent Lie algebras of class $4$ ]
 {Nilpotent Lie algebras of class $4$ with the derived subalgebra of dimension $3$}

\author[F. Johari]{Farangis Johari}
\author[P. Niroomand]{Peyman Niroomand}
\author[M. Parvizi]{Mohsen Parvizi}

\address{Department of Pure Mathematics\\
Ferdowsi University of Mashhad, Mashhad, Iran}
\email{farangisjohary@yahoo.com, farangis.johari@mail.um.ac.ir}
\address{School of Mathematics and Computer Science\\
Damghan University, Damghan, Iran}
\email{niroomand@du.ac.ir, p$\_$niroomand@yahoo.com}
\address{Department of Pure Mathematics\\
Ferdowsi University of Mashhad, Mashhad, Iran}
\email{parvizi@math.um.ac.ir}

\thanks{\textit{Mathematics Subject Classification 2010.} Primary 17B30; Secondary 17B05, 17B99.}

\keywords{}

\date{\today}


\begin{abstract}
The paper is devoted to give a full classification of all  finite dimensional nilpotent Lie algebras $ L $ of class $4$ such that $ \dim L^2=3. $ Moreover, we classify the capable ones.

\end{abstract}

\maketitle

\section{Introduction}
It is well known that the classification of nilpotent Lie algebras is a classical problem. Several classifications of nilpotent Lie algebras of dimension at most $ 7 $  over various ground fields are available in the literature (See \cite{cic, Gr2, Gr}).
It is not easy to classify nilpotent Lie algebras with an arbitrary dimension. Hence we are interested to classify nilpotent  Lie algebras
by focusing on some other aspects rather than the dimension. For a given Lie algebra $ L $ with $ \dim L^2=1, $ the structure of $ L $ is given  in
\cite{ni54}. When $ \dim L^2=2, $  we gave the structure of $ L $  when $ L $ is of class $ 3 $ and with some restrictions  for class $ 2 $
in \cite{ni41}. The  purpose  of this paper is to describe a classification of all nilpotent Lie algebras of class $ 4 $ with the derived subalgebra of dimension $ 3. $ Moreover, in this class, we classify 
which ones are capable.
\section{ Preliminaries}
This section is devoted to give some elementary and known results that  will be needed for the next investigations.
All Lie algebras in this paper are finite dimensional over any arbitrary field.

First we  recall  the concept of a central product of two Lie algebras $A$ and $B.$

\begin{defn}\label{cent}
A Lie algebra $L$ is a central product of $A$ and $B,$ if $ L=A+B,$ where  $A$ and $B$ are ideals of $ L $ such that $ [A,B]=0$ and $A\cap B\subseteq Z(L).$ We denote the central product of two Lie algebras $A$ and $B$ by $A\dotplus B.$
\end{defn}

The  following lemma emphasizes the  Heisenberg Lie algebras are in fact  central products some of their ideals.

\begin{lem}\cite[Lemma 3.3]{pair}\label{fr}
Let $ L $ be a Heisenberg Lie algebra of dimension $2m+1.$ Then $ L $ is  a central product of its ideals  $B_j$ for all $ j, $ $1\leq j\leq m$ such that each $B_j$ is  the Heisenberg Lie algebra of dimension $3.$
\end{lem}

A Lie algebra $L$ is called capable provided that
$L \cong H/Z(H)$ for some Lie algebra $H.$  The
notion of the epicenter $Z^*(L)$ for a Lie algebra $L$ was defined in \cite{alam}. It is shown
that $L$ is capable if and only if $Z^*(L) = 0.$
Another notion having a relation to the capability is the concept of the exterior square of
Lie algebras, $L \wedge L,$ which was introduced in \cite{el}. Our approach is on the concept of
the exterior center $Z^{\wedge}(L),$ the set of all elements $l$ of $L$ for which $l\wedge l' = 0_{L\wedge L}$ for all
$l' \in L.$ Niroomand et al. in \cite{ni3} showed $Z^{\wedge}(L) = Z^*(L)$ for any finite dimensional
Lie algebra $L.$\\
It is not  an easy matter to determine  the capability
of a central product of Lie algebras in general, but the next result gives the answer to this question in a particular case.
\begin{prop}\label{Hi}\cite[Proposition 2.2]{ni4}
Let $L$ be a  Lie algebra such that $L=A\dotplus B$ with  $ A^2\cap B^2\neq 0.$ Then $ A^2\cap B^2\subseteq Z^{\wedge}(L)$.                 Moreover, $ L $ is non-capable.
\end{prop}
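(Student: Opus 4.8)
The plan is to reduce the non-capability assertion to the containment $A^2\cap B^2\subseteq Z^{\wedge}(L)$. Indeed, granting the containment, the hypothesis $A^2\cap B^2\neq 0$ forces $Z^{\wedge}(L)\neq 0$; since $Z^{\wedge}(L)=Z^*(L)$ by \cite{ni3}, we get $Z^*(L)\neq 0$, and then the criterion of \cite{alam} (capability is equivalent to $Z^*(L)=0$) yields that $L$ is non-capable. So everything rests on the containment, and the hypothesis $A^2\cap B^2\neq 0$ is needed only at this final step.

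To prove $A^2\cap B^2\subseteq Z^{\wedge}(L)$, I would fix $z\in A^2\cap B^2$ and an arbitrary $l\in L$ and show $z\wedge l=0_{L\wedge L}$. Since $L=A+B$, first write $l=a+b$ with $a\in A$ and $b\in B$, so that $z\wedge l=z\wedge a+z\wedge b$ by bilinearity; it then suffices to annihilate each of the two summands separately. For the term $z\wedge a$, use that $z\in B^2$ to write $z=\sum_i[b_i,b_i']$ with $b_i,b_i'\in B$, and apply the defining relation $[x,y]\wedge w=x\wedge[y,w]-y\wedge[x,w]$ of the exterior square to get $z\wedge a=\sum_i\bigl(b_i\wedge[b_i',a]-b_i'\wedge[b_i,a]\bigr)$; since $[B,A]=[A,B]=0$ every inner bracket vanishes, so $z\wedge a=0$. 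Symmetrically, using $z\in A^2$ to write $z=\sum_j[a_j,a_j']$ with $a_j,a_j'\in A$ and expanding $z\wedge b$ the same way produces $z\wedge b=\sum_j\bigl(a_j\wedge[a_j',b]-a_j'\wedge[a_j,b]\bigr)=0$, again because $[A,B]=0$. Hence $z\wedge l=0$ for all $l\in L$, that is, $z\in Z^{\wedge}(L)$.

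I do not expect a genuine obstacle here: the whole argument is a two-line computation once the presentation of $L\wedge L$ by generators $x\wedge y$ together with the bilinear relation $[x,y]\wedge w=x\wedge[y,w]-y\wedge[x,w]$ is recalled. The only point requiring care is that the hypothesis $[A,B]=0$ must be fed in through this relation — writing $z$ as an element of $A^2$ to kill the $B$-part of $l$, and as an element of $B^2$ to kill the $A$-part — and it is worth recording that $A^2\cap B^2\subseteq A\cap B\subseteq Z(L)$ holds automatically, so the elements in question are central to begin with, which is a useful consistency check but is not itself enough to conclude membership in the exterior center.
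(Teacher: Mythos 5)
Your argument is correct: the paper does not prove this proposition but imports it from \cite[Proposition 2.2]{ni4}, and your computation is the standard one — use the defining relation $[x,y]\wedge w=x\wedge[y,w]-y\wedge[x,w]$ of the exterior square together with $[A,B]=0$, killing the $A$-part of $l$ by writing $z\in B^2$ as a sum of brackets from $B$ and the $B$-part by writing $z\in A^2$ as a sum of brackets from $A$, then conclude non-capability from $Z^{\wedge}(L)=Z^*(L)$ and the criterion $Z^*(L)=0$. This matches the argument in the cited source (and is exactly the style of computation the paper itself uses in Lemma \ref{4869}), so there is nothing to add.
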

Let $ cl(L)$ denote the nilpotency class of a Lie algebra $ L. $ The following theorem gives the classification of all capable nilpotent Lie algebras of class $3$ with the
derived subalgebra of dimension $2.$

\begin{thm}\cite[Theorem 5.3]{ni41}\label{26117}
Let $ L $ be an $n$-dimensional Lie algebra    such that $cl(L)=3$  and $\dim L^2=2.$ Then $L $ is capable if and only if
 $L\cong L_{4,3}\oplus  A(n-4) $ or  $L\cong L_{5,5}\oplus  A(n-5).$
\end{thm}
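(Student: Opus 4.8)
The plan is to first cut the problem down to finitely many isomorphism types and then decide capability on each of them through the exterior centre $Z^{\wedge}$.

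\emph{Reduction to a finite list.} Since $cl(L)=3$ we have $0\neq L^3\subsetneq L^2$, and $\dim L^2=2$ then forces $\dim L^3=1$ and $L^4=0$. Hence $L/L^3$ is nilpotent of class at most $2$ with one-dimensional derived subalgebra, so it is the direct sum of a Heisenberg Lie algebra $H(m)$ and an abelian ideal. Lifting this presentation to $L$ and running through the finitely many ways the remaining brackets can land in the one-dimensional space $L^3$ produces the explicit classification of such $L$ (the classification part of \cite{ni41}); in every case $L\cong M\oplus A(n-\dim M)$, where $M$ ranges over a fixed finite set of Lie algebras having no abelian direct factor, and $L_{4,3},L_{5,5}$ are among these $M$.

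\emph{A capability criterion.} Next I would record that for nonzero nilpotent Lie algebras $A,B$ the Künneth-type decomposition $(A\oplus B)\wedge(A\oplus B)\cong (A\wedge A)\oplus(B\wedge B)\oplus\big((A/A^2)\otimes(B/B^2)\big)$ yields, after a short computation, $Z^{\wedge}(A\oplus B)=\big(Z^{\wedge}(A)\cap A^2\big)\oplus\big(Z^{\wedge}(B)\cap B^2\big)$. Taking $B=A(m)$ with $m\geq 1$, and using $Z^{\wedge}=Z^{*}$ from \cite{ni3} together with the fact that $L$ is capable iff $Z^{*}(L)=0$, one gets that $M\oplus A(m)$ is capable iff $Z^{\wedge}(M)\cap M^2=0$, while $M$ itself is capable iff $Z^{\wedge}(M)=0$. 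So everything reduces to computing $Z^{\wedge}(M)$, and its trace on $M^2$, for the finitely many $M$ produced in the first step.

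\emph{Discarding the non-capable types and confirming the survivors.} For those $M$ in the list admitting a central product decomposition $M=A\dotplus B$ with $A^2\cap B^2\neq 0$ — typically the ones whose Heisenberg quotient has rank at least $2$, where Lemma \ref{fr} lets one split off a copy of the $3$-dimensional Heisenberg algebra while the one-dimensional $L^3$ remains shared — Proposition \ref{Hi} immediately gives $0\neq A^2\cap B^2\subseteq Z^{\wedge}(M)\cap M^2$, so no Lie algebra built on such an $M$ is capable; any remaining small exceptional $M$ is eliminated by exhibiting by hand a nonzero element of $M^2$ lying in $Z^{\wedge}(M)$. This should leave only $L_{4,3}$ and $L_{5,5}$, for which I would compute the nonabelian exterior square explicitly from a free nilpotent presentation (equivalently from the known Schur multiplier via $L\wedge L\cong \mathcal{M}(L)\oplus L^2$) and check directly from the defining relations that $Z^{\wedge}=0$; the criterion above then gives capability of $L_{4,3}\oplus A(n-4)$ and $L_{5,5}\oplus A(n-5)$ for every admissible $n$. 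I expect the main obstacle to be the bookkeeping in this last step: being sure the classification list is complete and that for each borderline $M$ the intersection $Z^{\wedge}(M)\cap M^2$ is computed correctly, since a single overlooked central element of $M^2$ would change the final answer. Pairing the cheap non-capability argument via Proposition \ref{Hi} with an independent lower bound on $\dim(M\wedge M)$ for the two survivors is the most reliable way to make these exterior-centre computations watertight.
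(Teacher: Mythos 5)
Be aware that the paper you were given contains no proof of this statement: Theorem \ref{26117} is imported wholesale from \cite[Theorem 5.3]{ni41}, so your attempt can only be measured against the methodology of that reference and against the exactly parallel class-$4$ arguments carried out in the present paper (Theorems \ref{48}, \ref{lkl}, Lemmas \ref{cc1} and \ref{4869}, Theorem \ref{2611}). By that yardstick your plan is the right one and is essentially the authors' own: split off the abelian direct summand and reduce to a stem algebra $T$ with $Z^{*}(L)=Z^{*}(T)$ (here this is Theorem \ref{mo}, quoted from \cite{pair}, rather than your K\"unneth-type computation of $Z^{\wedge}(A\oplus B)$, but the two give the same reduction), classify the stem algebras, discard the central-product types via Proposition \ref{Hi} together with Lemma \ref{fr}, and certify the survivors. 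For the survivors the paper's technique (Lemma \ref{cc1}) is simply to exhibit a Lie algebra $H$ with $H/Z(H)$ isomorphic to the target --- for $L_{4,3}$ one can take $H=L_{5,7}$ --- which is much lighter and less error-prone than your proposed computation of $M\wedge M$ from the Schur multiplier, though the latter is also viable.

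The one genuine error is the assertion that the stem parts $M$ range over a ``fixed finite set''. They do not: for every $m\geq 1$ the central product of $L_{4,3}$ with $H(m)$ over the one-dimensional ideal $L_{4,3}^{3}=Z(L_{4,3})$ is a stem Lie algebra of class $3$ with two-dimensional derived subalgebra and dimension $4+2m$, so the stem types form infinite families parametrized by the Heisenberg rank (compare the infinite lists $(c)$--$(r)$ in Theorem \ref{lkl} for the class-$4$ analogue). Consequently ``running through the finitely many ways the remaining brackets can land in $L^{3}$'' is not a finite check, and this classification step --- which is the real content of \cite[Theorem 5.3]{ni41} and the part your sketch leaves entirely to the reader --- requires the kind of case analysis performed in Theorems \ref{48} and \ref{481}. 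The error is not fatal to the capability verdict, because the infinite families are precisely the central products $A\dotplus B$ with $A^{2}\cap B^{2}\neq 0$ and are eliminated uniformly by Proposition \ref{Hi}, as you yourself anticipate; but as written the reduction step claims more than it delivers, and the completeness of the list of stem types (hence the ``only if'' direction of the theorem) is exactly the point at which the proof still has to be carried out.
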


From \cite{Gr}, the only Lie algebra of  maximal class of dimension $4$ is isomorphic to
\[L_{4,3}=\langle x_1,\ldots,x_4|[x_1, x_2] = x_3, [x_1, x_3] = x_4\rangle,\]
and there are  two Lie algebras of  maximal class of dimension $5$ that are isomorphic  to
\[L_{5,6}=\langle x_1,\ldots,x_5|[x_1, x_2] = x_3, [x_1, x_3] = x_4, [x_1, x_4] = x_5, [x_2, x_3] = x_5\rangle\]
and
\[L_{5,7}=\langle x_1,\ldots,x_5|[x_1, x_2] = x_3, [x_1, x_3] = x_4, [x_1, x_4] = x_5\rangle,\] respectively.

We say a Lie algebra $L$ is a semidirect sum of an ideal $I$ by a subalgebra $K$ if $L=I+K,$
$ I\cap K=0. $ The semidirect sum of an ideal $I$ by a subalgebra $K$ is denoted by $K\ltimes I.$\newline

 \begin{lem}\cite[Lemma 4.1]{ni41}\label{rr11}
  Let $L  $  be a $ 5$-dimensional  nilpotent stem Lie algebra of class $ 3 $ and $ \dim L^2=2. $ Then
\[ L\cong L_{5,5}=\langle x_1,\ldots,x_5| [x_1, x_2] = x_3, [x_1, x_3] = x_5, [x_2, x_4] = x_5\rangle.\]
Moreover,
$L_{5,5}=I\rtimes \langle x_4\rangle  $ in which  \[ I=\langle x_1,x_2,x_3,x_5| [x_1, x_2] = x_3, [x_1, x_3] = x_5\rangle\cong L_{4,3},~\text{and}~[I, \langle x_4\rangle]=\langle x_5\rangle=Z(L_{5,5}).\]
  \end{lem}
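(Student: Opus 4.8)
The statement has two parts: first, that a $5$-dimensional nilpotent stem Lie algebra $L$ of class $3$ with $\dim L^2 = 2$ is isomorphic to $L_{5,5}$; second, the explicit decomposition $L_{5,5} = I \rtimes \langle x_4 \rangle$ with the claimed properties. I expect the first part to be the substantive one, and the second to be a direct verification once coordinates are fixed.

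For the first part, the plan is to exploit the constraints imposed by "stem" (meaning $Z(L) \subseteq L^2$), class $3$, and the dimension count. Since $\dim L = 5$ and $\dim L^2 = 2$, we have $\dim L/L^2 = 3$; since $cl(L) = 3$ we have $0 \neq L^3 = [L, L^2] \subseteq L^2$ and $L^4 = 0$, and since $L^3 \subsetneq L^2$ (class is exactly $3$) we get $\dim L^3 = 1$. The stem condition forces $Z(L) \subseteq L^2$, and since $L^3$ is a nonzero ideal contained in $Z(L)$ (as $[L, L^3] \subseteq L^4 = 0$) and $\dim L^2 = 2$, one analyzes whether $Z(L) = L^3$ or $Z(L) = L^2$; the latter would make $L$ of class $2$, a contradiction, so $Z(L) = L^3$ is one-dimensional. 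Pick $x_5$ spanning $L^3 = Z(L)$ and $x_3$ completing a basis of $L^2$, so $[L, x_3] \equiv$ nonzero multiple of $x_5$ modulo nothing (it lies in $L^3$). Now choose $x_1, x_2, x_4$ lifting a basis of $L/L^2$. The bracket $[\,\cdot\,,\,\cdot\,]$ descends to an alternating bilinear map $L/L^2 \times L/L^2 \to L^2/L^3 \cong \langle \bar x_3\rangle$, i.e. essentially a nonzero alternating form on a $3$-space, hence of rank $2$; by a change of basis on $x_1, x_2, x_4$ we may assume $[x_1, x_2] = x_3$ while $[x_1, x_4], [x_2, x_4] \in L^2$ are "new" only in their $L^3$-component. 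A further normalization of $x_4$ (adding elements of $L^2$, which does not change $[x_1,x_2]=x_3$) kills the $x_3$-components of $[x_1,x_4]$ and $[x_2,x_4]$, leaving $[x_1, x_4] = \alpha x_5$, $[x_2, x_4] = \beta x_5$, $[x_1, x_3] = \gamma x_5$, $[x_2, x_3] = \delta x_5$ for scalars $\alpha,\beta,\gamma,\delta$. The Jacobi identity on $(x_1, x_2, x_4)$ — more precisely on the relation $[x_3, x_4] = [[x_1,x_2],x_4]$ — together with $[x_3,x_4] \in L^3$ being consistent, and the requirement that $Z(L)$ be exactly one-dimensional, pins down which of these scalars can vanish: if too many vanish, $x_4$ or a combination becomes central, enlarging $Z(L)$. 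One checks that, up to scaling the basis vectors and swapping/recombining $x_1, x_2$, the only surviving possibility is $[x_1,x_3] = x_5$, $[x_2,x_4] = x_5$ (with $[x_1,x_4]=[x_2,x_3]=0$), which is exactly $L_{5,5}$.

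For the second part, with the presentation $[x_1,x_2]=x_3$, $[x_1,x_3]=x_5$, $[x_2,x_4]=x_5$ in hand, set $I = \langle x_1, x_2, x_3, x_5\rangle$ and $K = \langle x_4 \rangle$. One verifies directly that $I$ is an ideal: it is a subalgebra (all brackets among $x_1,x_2,x_3,x_5$ land in $\langle x_3, x_5\rangle \subseteq I$), and $[x_4, x_i] \in \langle x_5 \rangle \subseteq I$ for all $i$, so $[L, I] \subseteq I$. Clearly $I \cap K = 0$ and $L = I + K$, giving $L_{5,5} = I \rtimes \langle x_4 \rangle$. The isomorphism $I \cong L_{4,3}$ is immediate from matching generators: $I$ is generated by $x_1, x_2$ with $[x_1,x_2]=x_3$, $[x_1,x_3]=x_5$ and all longer brackets zero, which is precisely the presentation of $L_{4,3}$ with $(x_1,x_2,x_3,x_4) \mapsto (x_1,x_2,x_3,x_5)$. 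Finally, $[I, \langle x_4\rangle]$ is spanned by $[x_1,x_4]=0$, $[x_2,x_4]=x_5$, $[x_3,x_4] = [[x_1,x_2],x_4]$, which by Jacobi equals $[[x_1,x_4],x_2] + [x_1,[x_2,x_4]] = 0 + [x_1, x_5] = 0$, and $[x_5, x_4]=0$; hence $[I, \langle x_4\rangle] = \langle x_5 \rangle$. That $\langle x_5 \rangle = Z(L_{5,5})$ follows since $x_5 \in L^3 = Z(L)$ by construction, and no element outside $\langle x_5\rangle$ is central (e.g. $x_4$ fails as $[x_2,x_4]=x_5 \neq 0$, and any element with nonzero $x_1, x_2, x_3$ or $x_4$ component has a nonzero bracket with a suitable generator).

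The main obstacle is the case analysis in the first part: systematically showing that every a priori possible set of structure constants either violates the stem/class-$3$ hypotheses or is equivalent under a basis change to the $L_{5,5}$ presentation. This is where one must be careful not to miss a branch; everything afterward is bookkeeping with the Jacobi identity.
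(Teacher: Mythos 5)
The paper does not prove this lemma at all: it is imported verbatim as \cite[Lemma 4.1]{ni41}, so there is no internal proof to compare against. Your self-contained argument is correct and, in outline, complete. The reduction is sound: stem plus class $3$ forces $Z(L)=L^3$ of dimension $1$; the bracket induces a nonzero alternating form $L/L^2\times L/L^2\to L^2/L^3$ on a $3$-dimensional space, which has rank $2$, so one may take $x_4$ in its radical, giving $[x_1,x_2]=x_3$ and $[x_1,x_4],[x_2,x_4]\in L^3$ (and then $[x_3,x_4]=0$ by Jacobi). The only branch you leave at the level of ``one checks'' is worth spelling out, since it is where both hypotheses are actually used: $(\,[x_1,x_3],[x_2,x_3]\,)\neq(0,0)$ because $L^3\neq 0$, so after recombining $x_1,x_2$ one gets $[x_1,x_3]=x_5$, $[x_2,x_3]=0$; and then $[x_2,x_4]\neq 0$ is forced by the stem condition, because if $[x_2,x_4]=0$ and $[x_1,x_4]=\alpha x_5$ then $x_4-\alpha x_3$ is central but lies outside $L^2$. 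Replacing $x_4$ by $x_4-\alpha x_3$ and rescaling yields exactly the $L_{5,5}$ relations, and your verification of the semidirect decomposition and of $[I,\langle x_4\rangle]=\langle x_5\rangle=Z(L_{5,5})$ is routine and correct. So your proposal supplies a proof the present paper only cites; what it buys is independence from \cite{ni41}, at the cost of the case analysis you flag.
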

 \begin{lem}\label{rr112} Let $L  $  be a $ 6$-dimensional  nilpotent stem Lie algebra of class $ 4 $ and $ \dim L^2=3. $ Then $L  $  is isomorphic to one of the Lie algebras listed below.
  \begin{itemize}
  \item[$(1)$]
 $ L_{6,11}=\langle x_1,\ldots,x_6| [x_1, x_2] = x_3, [x_1, x_3] = x_4, [x_1, x_4] = [x_2, x_3] =[x_2, x_5]= x_6\rangle =I_1\rtimes \langle x_5\rangle,  $ in which  $ I_1=\langle x_1,x_2,x_3,x_4,x_6| [x_1, x_2] = x_3, [x_1, x_3] = x_4,[x_1, x_4] = [x_2, x_3] = x_6\rangle\cong L_{5,6}$ and $[I, \langle x_5\rangle]=\langle x_6\rangle=Z(I_1).$
 \item[$(2)$]
 $ L_{6,12}=\langle x_1,\ldots,x_6| [x_1, x_2] = x_3, [x_1, x_3] = x_4, [x_1, x_4] =[x_2, x_5]=x_6\rangle =I_2\rtimes \langle x_5\rangle,  $ in which  $ I_2=\langle x_1,x_2,x_3,x_4,x_6| [x_1, x_2] = x_3, [x_1, x_3] = x_4,[x_1, x_4] =  x_6\rangle\cong L_{5,7}$ and $ [I_2,\langle x_5\rangle]=\langle x_6\rangle =Z(I_2). $
 \item[$(3)$]
 $ L_{6,13}=\langle x_1,\ldots,x_6| [x_1, x_2] = x_3, [x_1, x_3] = [x_2, x_4]=x_5, [x_1,x_5]=[x_3, x_4]=x_6\rangle =I_3\rtimes \langle x_4\rangle,  $ in which  $ I_3=\langle x_1,x_2,x_3,x_5,x_6| [x_1, x_2] = x_3, [x_1, x_3] = x_5,[x_1, x_5] =  x_6\rangle\cong L_{5,7}$
 and $ [I_3,\langle x_4\rangle]=\langle x_5,x_6\rangle. $
 \end{itemize}
  \end{lem}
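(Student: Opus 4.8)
The plan is to classify all $6$-dimensional nilpotent stem Lie algebras $L$ of class $4$ with $\dim L^2 = 3$ by analysing the lower central series. Since $\dim L = 6$, $\dim L^2 = 3$, and $L$ is stem (so $Z(L) \subseteq L^2$), the lower central series $L \supseteq L^2 \supseteq L^3 \supseteq L^4 \supseteq 0$ has successive quotients whose dimensions sum appropriately: $\dim L/L^2 = 3$, and $\dim L^2 = 3$ must be distributed among $L^2/L^3$, $L^3/L^4$, $L^4$. Because $cl(L) = 4$ we have $L^4 \neq 0$, so the only possibilities for $(\dim L^2/L^3, \dim L^3/L^4, \dim L^4)$ are $(1,1,1)$. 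This already forces $\dim L/L^2 = 3$ with $L/L^2$ abelian of dimension $3$, and each graded piece of $L^2$ one-dimensional.

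Next I would fix a generating set: pick $x_1, x_2$ (and possibly a third generator $x_5$ or $x_4$) mapping to a basis of $L/L^2$ modulo $L^2$, choose $x_3 \in L^2 \setminus L^3$, $x_4$ or $x_5 \in L^3 \setminus L^4$, and a spanning element of $L^4$. The Jacobi identity together with $L^4 = [L, L^3]$ being one-dimensional severely constrains the bracket table. The key structural dichotomy is whether $L/Z(L)$, or equivalently the "top" of $L$, is generated by a single element acting on $L^2$ (the maximal-class-like situation coming from $L_{5,6}$ or $L_{5,7}$) or whether two generators genuinely act. Concretely I expect to argue that $L$ contains an ideal $I$ of dimension $5$ which is itself a stem Lie algebra of class $4$ (hence class $3$ over its own derived algebra is too small — rather $I$ will have $\dim I^2 = 2$ or $3$), and by the known $5$-dimensional classification (maximal class algebras $L_{5,6}$, $L_{5,7}$ from \cite{Gr}, and Lemma \ref{rr11} type arguments) $I$ is one of $L_{5,6}$, $L_{5,7}$. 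Then $L = I \rtimes \langle t \rangle$ for a suitable $t$, and one determines the possible actions of $t$ on $I$ modulo inner ones, using that $[I,\langle t\rangle] \subseteq I^2$ and the derivation must respect the grading; this yields exactly the three listed presentations $L_{6,11}, L_{6,12}, L_{6,13}$.

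The concrete execution would run: (i) establish the graded dimension vector $(3;1,1,1)$ as above; (ii) show $L^3$ is contained in an abelian ideal and locate a codimension-one ideal $I \supseteq L^3$ with $L^2 \subseteq I$, so that $I$ is a $5$-dimensional nilpotent Lie algebra with $I^2 = L^2 \cap I$ of dimension $2$ or $3$ and $cl(I) \in \{3,4\}$; (iii) invoke the classification of $5$-dimensional nilpotent Lie algebras to pin $I$ down — the relevant ones with the right derived dimension and class are $L_{5,6}$ and $L_{5,7}$ (maximal class); (iv) write $L = I \rtimes \langle u \rangle$ with $u$ the remaining generator, and classify the outer derivations of $I$ of degree $0$ with image in $Z(I)$ or in $I^2$, normalising by automorphisms of $I$ and by changing $u$ modulo $I$; (v) read off the three normal forms and verify each is genuinely of class $4$ with $\dim L^2 = 3$ and stem, and that they are pairwise non-isomorphic (e.g. by comparing $\dim Z(L)$, or the isomorphism type of $L/L^4$, or whether $L^2$ is abelian).

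The main obstacle I anticipate is step (iv): controlling the derivation $\mathrm{ad}_u|_I$ up to the combined action of $\mathrm{Aut}(I)$ and the freedom $u \mapsto u + (\text{element of } I)$. One must be careful that $u$ can be chosen with $[u, L^2] \subseteq L^3$ or even smaller, and that the Jacobi identity relating $[u, x_i]$ for the generators $x_i$ of $I$ to the already-fixed brackets inside $I$ is fully exploited; in particular distinguishing case $(3)$, where $[I,\langle x_4\rangle]$ is two-dimensional (so $x_4$ acts "more" than in cases $(1),(2)$ where the action lands in the one-dimensional centre), requires tracking how the derivation moves $L^3$ as well as $L^2/L^3$. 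A secondary but routine nuisance is ruling out spurious normal forms by producing isomorphisms to one of the three listed, which I would handle by a change-of-basis computation that I will not carry out here.
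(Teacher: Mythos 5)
Your route is genuinely different from the paper's: the paper proves this lemma in three lines by citing the known classification of six-dimensional nilpotent Lie algebras in \cite{cic,Gr}, which immediately yields $L\cong L_{6,11}$, $L_{6,12}$ or $L_{6,13}$, and then merely verifying the stated semidirect-sum decompositions by inspecting the bracket tables. You instead propose to re-derive the classification from first principles. That would be more self-contained, but as written there is a genuine gap: all of the substance sits in your steps (iv) and (v) --- classifying the action of the sixth generator $u$ on the five-dimensional ideal $I$ modulo $\mathrm{Aut}(I)$ and the substitutions $u\mapsto u+w$ with $w\in I$, and then showing that exactly three pairwise non-isomorphic normal forms survive --- and you explicitly decline to carry out that computation (``a change-of-basis computation that I will not carry out here''). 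Until it is done, nothing excludes further isomorphism classes, so the lemma is not proved. Note also that the paper works over an arbitrary field and the six-dimensional classification in \cite{cic} genuinely differs from the characteristic-$\neq 2$ one in \cite{Gr}, so the omitted normal-form analysis is exactly where field-dependent subtleties would surface.

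A secondary, fillable gap is in your steps (ii)--(iii): you hedge on whether the five-dimensional ideal $I$ has $\dim I^2=2$ or $3$ and class $3$ or $4$. In fact you need, and can obtain, a maximal-class $I$: the bracket induces a nonzero alternating form $L/L^2\times L/L^2\to L^2/L^3\cong A(1)$ on a three-dimensional space, so there exist $a,b$ with $[a,b]\notin L^3$; then $H=\langle a,b\rangle+L^2$ satisfies $H^2+L^3=L^2$, and Lemma \ref{z} gives $H^i=L^i$ for all $i\geq 2$, so $H$ is a five-dimensional ideal of maximal class, hence isomorphic to $L_{5,6}$ or $L_{5,7}$. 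This selection argument is necessary, since not every codimension-one ideal containing $L^2$ works (in $L_{6,13}$ the ideal $\langle x_2,x_4\rangle+L^2$ has class $2$). With that argument supplied and the normal-form computation actually executed, your plan would yield an independent proof; as it stands it is an outline of one.
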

  \begin{proof}
     By looking at the classification  of   nilpotent Lie algebras of dimension  at most $ 6 $ in \cite{cic,Gr}, we get $L\cong L_{6,11},$ $L\cong L_{6,12}$ or  $L\cong L_{6,13}.$  Let $L\cong L_{6,11}.$ It is easy to check that $ L_{6,11}=I_1\rtimes \langle x_5\rangle,  $ in which  $ I_1=\langle x_1,x_2,x_3,x_4,x_6| [x_1, x_2] = x_3, [x_1, x_3] = x_4,[x_1, x_4] = [x_2, x_3] = x_6\rangle\cong L_{5,6}$ and $[I, \langle x_5\rangle]=\langle x_6\rangle=Z(L_{5,6}).$ Similarly, we can see that
   $ L_{6,12}=\langle x_1,\ldots,x_6| [x_1, x_2] = x_3, [x_1, x_3] = x_4, [x_1, x_4] =[x_2, x_5]=x_6\rangle =I_2\rtimes \langle x_5\rangle,  $ in which  $ I_2=\langle x_1,x_2,x_3,x_4,x_6| [x_1, x_2] = x_3, [x_1, x_3] = x_4,[x_1, x_4] =  x_6\rangle\cong L_{5,7}$ and $ [I_2,\langle x_5\rangle]=\langle x_6\rangle=Z(L_{5,7}). $ Now, let $L\cong L_{6,13}.$
   Clearly $Z(L) = \langle x_6\rangle$ and $L_{6,13} = I_3+\langle x_4\rangle,$ where $ I_3=\langle x_1,x_2,x_3,x_5,x_6| [x_1, x_2] = x_3, [x_1, x_3] = x_5,[x_1, x_5] =  x_6\rangle\cong L_{5,7}$ and $ [I_3,\langle x_4\rangle]=\langle x_5,x_6\rangle,$ as required.
  \end{proof}

We need the following lemma.
\begin{lem}\label{z}\cite[Lemma 1]{zac} Let $L$ be a nilpotent Lie algebra and $H$ be a subalgebra of $L$ such that $L^2 =
H^2 + L^3.$ Then
$L^i = H^i$ for all $i \geq 2.$
Moreover, $H$ is an ideal of $L.$
\end{lem}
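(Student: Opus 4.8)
The plan is to first upgrade the hypothesis $L^2 = H^2 + L^3$ to the whole family of identities $L^i = H^i + L^{i+1}$ for every $i \geq 2$, and then to collapse this family using nilpotency. Granting the family, the endgame is short: picking $m$ with $L^{m+1} = 0$ (nilpotency), the identity at $i = m$ reads $L^m = H^m$, and descending from $i = m$ down to $i = 2$, using $H^{i+1} \subseteq H^i$ at each step, gives $L^i = H^i + L^{i+1} = H^i + H^{i+1} = H^i$; for $i > m$ both sides are zero. The ``moreover'' clause is then immediate, since $[H, L] \subseteq L^2 = H^2 \subseteq H$, so $H$ is an ideal of $L$.

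The inclusion $H^i + L^{i+1} \subseteq L^i$ is trivial because $H \leq L$, so the content is the reverse inclusion, which I would prove by induction on $i$. The case $i = 2$ is exactly the hypothesis. For $i \geq 3$ I would write $L^i = [L^{i-1}, L]$ and substitute the identity at level $i - 1$ (available by induction, the base being the hypothesis), obtaining $L^i = [H^{i-1}, L] + [L^i, L] = [H^{i-1}, L] + L^{i+1}$. Hence everything reduces to the auxiliary claim $[H^{j}, L] \subseteq H^{j+1} + L^{j+2}$ for all $j \geq 1$.

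I would prove this auxiliary claim by a second induction, on $j$. The base case $j = 1$ is just $[H, L] \subseteq L^2 = H^2 + L^3$. For the step, $H^j$ is spanned by brackets $[a, b]$ with $a \in H^{j-1}$ and $b \in H$, so for $x \in L$ the Jacobi identity gives $[[a, b], x] = [[a, x], b] + [a, [b, x]]$. In the first summand, $[a, x] \in [H^{j-1}, L] \subseteq H^{j} + L^{j+1}$ by the inductive hypothesis (for $j = 2$ this is the base case), so $[[a, x], b] \in [H^{j}, H] + [L^{j+1}, L] \subseteq H^{j+1} + L^{j+2}$. In the second summand, $[b, x] \in [H, L] \subseteq H^2 + L^3$, so $[a, [b, x]] \in [H^{j-1}, H^2] + [H^{j-1}, L^3] \subseteq H^{j+1} + L^{j+2}$, where I use $[H^{j-1}, L^3] \subseteq [L^{j-1}, L^3] \subseteq L^{j+2}$. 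This closes the induction.

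The main obstacle is essentially bookkeeping: organizing the two nested inductions so that at each stage the required input (the level-$(i-1)$ identity, the level-$(j-1)$ auxiliary claim, and the bare hypothesis $L^2 = H^2 + L^3$) is already in hand. The one genuine computation is the single application of the Jacobi identity above; this is precisely the point at which ``$H$ is a subalgebra'' gets leveraged into enough control over $[H^j, L]$ to run the argument, while everything else is formal manipulation of the lower central series via $[H^a, H^b] \subseteq H^{a+b}$ and $[L^a, L^b] \subseteq L^{a+b}$.
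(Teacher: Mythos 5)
Your argument is correct and complete. Note that the paper itself gives no proof of this lemma---it is quoted verbatim from Zack's article \cite[Lemma 1]{zac}---so there is nothing internal to compare against; your two nested inductions (first establishing $L^i = H^i + L^{i+1}$ via the auxiliary estimate $[H^j,L]\subseteq H^{j+1}+L^{j+2}$ proved with the Jacobi identity, then collapsing from the top of the lower central series by nilpotency) constitute the standard proof of this statement, and all the steps, including the final observation $[H,L]\subseteq L^2=H^2\subseteq H$, check out.
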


\section{Main results}
We are going  to give the structure of all nilpotent Lie algebras of class $ 4$  with the derived subalgebra of dimension $ 3. $
Moreover, we determine which one of these are capable.

 The next two results which are stated for  Lie algebras have a group theoretical reason for $p$-groups  in  \cite[Lemma 2.3 and Theorem 2.4]{bl}.  Here, we give a proof for them.

\begin{lem}\label{12}
Let $ L $ be an $n$-dimensional nilpotent Lie algebra of class $ c $ such that $ \dim L^2=c-1 $ and $ I $ be an ideal of dimension $i$ $( 0\leq i\leq c-1)$ contained in $ L^2 .$ Then $ I=L^{c-i+1}.$
\end{lem}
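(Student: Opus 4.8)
The plan is to exploit the rigidity of the lower central series when the derived subalgebra has dimension exactly $c-1$: since $L^2 \supseteq L^3 \supseteq \cdots \supseteq L^c \supseteq L^{c+1}=0$ is a strictly decreasing chain (the class being exactly $c$) and $\dim L^2 = c-1$, each quotient $L^k/L^{k+1}$ for $2 \le k \le c$ must be one-dimensional; in other words $\dim L^k = c-k+1$ for every $k$ with $2 \le k \le c$, and $L^{c+1}=0$. Thus the terms of the lower central series are precisely the members of a complete flag in $L^2$.

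Next I would locate $I$ inside this flag. Since $I$ is an ideal of $L$ of dimension $i$ contained in $L^2$, the key observation is that $I$ must itself appear as one of the $L^k$. To see this, consider the ideals $I \cap L^k$; these give a filtration of $I$. More efficiently, I would argue that for any ideal $I \subseteq L^2$, commuting with $L$ drops dimension by exactly the right amount: because $L/L^{k+1}$ acts on $L^k/L^{k+1}$ and the latter is one-dimensional, an ideal not contained in $L^{k+1}$ but contained in $L^k$ must, after bracketing with $L$, land in $L^{k+1}$, and the standard argument (each nonzero ideal of a nilpotent Lie algebra meets the center, applied to quotients) forces $I$ to be squeezed between consecutive terms. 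Concretely: let $k$ be the largest index with $I \subseteq L^k$. Then $I \not\subseteq L^{k+1}$, so $I + L^{k+1} = L^k$ (both have codimension forcing equality, since $L^k/L^{k+1}$ is one-dimensional and $I$ surjects onto it). Hence $\dim I \ge \dim L^k - \dim(I \cap L^{k+1})$, while minimality considerations on the chain $I \supseteq I\cap L^{k+1} \supseteq I \cap L^{k+2} \supseteq \cdots$ combined with the fact that $[L, I \cap L^{j}] \subseteq I \cap L^{j+1}$ and $[L,I]\subseteq I$ let me bootstrap that $I \supseteq L^{k+1}$ as well, whence $I = L^k$ (no proper ideal can sit strictly between $L^{k}$ and $L^{k+1}$ here because the whole interval is spanned by one vector modulo $L^{k+1}$, and then descend). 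I expect the cleanest route is actually induction on $i$: the case $i=0$ is trivial ($I=0=L^{c+1}$); for $i \ge 1$, pick the smallest $k$ with $I \supseteq L^k$ — such $k$ exists since $L^c$ is one-dimensional and central, and any nonzero ideal in $L^2 \subseteq$ (a nilpotent algebra) must contain $L^c$ once one checks $I \cap Z(L) \ne 0$ and $Z(L)\cap L^2 = L^c$.

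Then the bookkeeping closes: once we know $L^{k} \subseteq I \subseteq L^{k-1}$ for the appropriate $k$, dimension count gives $\dim I \in \{\dim L^k, \dim L^{k-1}\} = \{c-k+1, c-k+2\}$, and since $L^k/L^{k+1}$ and $L^{k-1}/L^k$ are one-dimensional, $I$ equals $L^k$ or $L^{k-1}$ — in either case an ideal of the form $L^j$ with $\dim L^j = i$, i.e. $c - j + 1 = i$, so $j = c - i + 1$ and $I = L^{c-i+1}$, as claimed.

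The main obstacle is the middle step: showing that an arbitrary ideal $I$ with $0 \ne I \subseteq L^2$ cannot sit "skew" across the flag but must coincide with some $L^k$. The one-dimensionality of the factors $L^k/L^{k+1}$ is what makes this work, but one must be careful to run the argument from the bottom (using $L^c \subseteq Z(L)$ and that every nonzero ideal of a nilpotent Lie algebra meets the center) rather than guessing the position of $I$ directly; I would phrase it as a downward induction showing $L^c \subseteq I \implies L^{c-1}\cap I$ is either $0$ mod... — more simply, induct on $n - i$ or on the class, passing to $L/L^c$ and using that $I/(I\cap L^c)$ is an ideal of dimension $i$ or $i-1$ in a class-$(c-1)$ algebra with derived subalgebra of dimension $c-2$, invoking Lemma \ref{z}-type rigidity only if needed. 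Everything else is a dimension count.
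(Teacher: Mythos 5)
Your setup is right: the strict descent of the lower central series together with $\dim L^2=c-1$ forces $\dim L^k=c-k+1$ for $2\le k\le c$, and once one knows $I=L^k$ for some $k$, the identity $k=c-i+1$ is pure bookkeeping. The gap sits exactly where you yourself flag ``the main obstacle'': you never actually prove that an ideal $I\subseteq L^2$ must coincide with a term of this flag. You offer three partial starts (largest $k$ with $I\subseteq L^k$; smallest $k$ with $L^k\subseteq I$; passing to $L/L^c$), each left at the level of ``minimality considerations let me bootstrap.'' In the second version, ``smallest $k$ with $L^k\subseteq I$'' does not by itself give $I\subseteq L^{k-1}$, and the auxiliary fact $Z(L)\cap L^2=L^c$ that you invoke is, in the paper's logical order, a \emph{consequence} of this lemma (Proposition \ref{13}); if you want to use it you must prove it independently (it is provable: a central $z\in L^2\setminus L^c$ would make $L/\langle z\rangle$ of class $c$ with a derived subalgebra of dimension $c-2$, which contradicts the strict descent of its lower central series).

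The good news is that your first version closes with a short computation you did not write down. Let $k$ be largest with $I\subseteq L^k$; since $L^k/L^{k+1}$ is one-dimensional and $I\not\subseteq L^{k+1}$, we get $L^k=I+L^{k+1}$. Then $[L,I]\subseteq I$ gives, by induction on $j\ge k$, that $L^{j}\subseteq I+L^{j+1}$ implies $L^{j+1}=[L,L^{j}]\subseteq [L,I]+[L,L^{j+1}]\subseteq I+L^{j+2}$; chaining these down to $L^{c+1}=0$ yields $L^{k+1}\subseteq I$, hence $I=I+L^{k+1}=L^k$, and the dimension count finishes. With that inserted your argument is complete, and it is genuinely different from the paper's: the paper runs an upward induction on $\dim I$, choosing a one-dimensional ideal $M/I\subseteq (L/I)^2\cap Z(L/I)$, identifying $M=L^{c-i}$ by the inductive hypothesis, and concluding $L^{c-i+1}=[L,M]\subseteq I$ before comparing dimensions. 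The paper's route never needs to locate $I$ inside the flag; yours does, at the cost of the bootstrapping step above, which you must actually carry out rather than gesture at.
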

\begin{proof}
Clearly, $ \dim L^j =  c-j+1,$ where $ 1\leq j \leq c $. We proceed by induction on $ c-i+1 $. If $ i=c-1, $ the result follows easily. Let $ c-i>1$ and $ M/I $  be an ideal of dimension $ 1 $ such that $ M/I \subseteq  (L/I)^2 \cap Z(L/I).$ So $ M $ is an $( i+1)$-dimensional ideal of $ L $     such that $ I\subsetneqq M\subseteq L^2. $ By using the induction hypothesis, $ M=L^{c-i }.$ Since $M/I\subseteq Z( L/I),$ we have  $L^{c-i+1}= [L,L^{c-i}]=[L,M]\subseteq I.$  Now, both   $ I $ and $ L^{c-i+1} $ are  of dimension $ i, $ and hence  $ I=L^{c-i+1} $. The result follows.
\end{proof}

 Recall that an $n$-dimensional nilpotent Lie algebra  $L$  is said to be  nilpotent of maximal class if $cl(L)=n-1.$  For a maximal class Lie algebra $L,$ we have  $\dim (L/L^2) = 2,$ $Z_i(L) = L^{n-i}$ and $\dim (L^j/L^{j+1}) = 1$ for all $i,$  $ j,$  $0\leq  i \leq n-1$ and   $2\leq j \leq n-1$ (see \cite{bos} for more information).
\begin{prop}\label{13}
Let $ L $ be an $n$-dimensional Lie algebra of class $ c $ such that $ \dim L^2=c-1.$ Then $ L^2 \cap Z_i(L)=L^{c-i+1}$ for all  $ i, $ $ 0\leq i\leq c-1.$
\end{prop}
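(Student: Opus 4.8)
The plan is to combine the elementary ``lower-central-series-inside-upper-central-series'' inclusion with Lemma~\ref{12}, which forces any ideal of $L$ contained in $L^2$ to be itself a term of the lower central series; once both are in hand the statement reduces to elementary index bookkeeping, and no separate induction on $i$ is needed. First I would record the structural consequences of $\dim L^2=c-1$: the chain $L^2\supsetneq L^3\supsetneq\cdots\supsetneq L^{c}\supsetneq L^{c+1}=0$ has $c-1$ strict inclusions (strictness since $L$ is nilpotent of class exactly $c$) and total dimension drop $c-1$, so every drop equals $1$; hence $\dim L^j=c-j+1$ for $2\le j\le c+1$, $L^a\subseteq L^b$ iff $a\ge b$ for $a,b\in\{2,\dots,c+1\}$, and $L^m=0$ iff $m\ge c+1$. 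I would also recall $Z_i(L)=\{x\in L:[x,l_1,\dots,l_i]=0\text{ for all }l_1,\dots,l_i\in L\}$, which gives at once $[Z_i(L),L]\subseteq Z_{i-1}(L)$ and $[L^m,\underbrace{L,\dots,L}_{i}]=L^{m+i}$. With these, the inclusion $L^{c-i+1}\subseteq L^2\cap Z_i(L)$ is immediate: $c-i+1\ge 2$ gives $L^{c-i+1}\subseteq L^2$, while $[L^{c-i+1},\underbrace{L,\dots,L}_{i}]=L^{c+1}=0$ gives $L^{c-i+1}\subseteq Z_i(L)$ (equivalently, invoke the dual relation $L^{k+1}\subseteq Z_{c-k}(L)$, proved by a one-line downward induction on $k$).

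For the reverse inclusion set $I:=L^2\cap Z_i(L)$. This is an ideal of $L$ contained in $L^2$, and $j:=\dim I\le\dim L^2=c-1$, so Lemma~\ref{12} applies and gives $I=L^{c-j+1}$. It then suffices to show $j=i$, which follows from two one-sided bounds: the inclusion just established gives $L^{c-i+1}\subseteq I=L^{c-j+1}$, whence $j\ge i$; and $I=L^{c-j+1}\subseteq Z_i(L)$ forces $L^{c-j+1+i}=[I,\underbrace{L,\dots,L}_{i}]=0$, whence $c-j+1+i\ge c+1$, i.e. $j\le i$. Thus $j=i$ and $I=L^{c-i+1}$. (The value $i=0$ is the degenerate case $L^2\cap Z_0(L)=0=L^{c+1}$, covered by the same computation with $j=0$, or checked directly.)

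I expect the main obstacle to be spotting the right move rather than any technical difficulty: the key observation is that $L^2\cap Z_i(L)$, being an ideal inside $L^2$, is pinned down by Lemma~\ref{12} to be a single lower-central term, after which its dimension is completely determined by the two inequalities above. The one point that genuinely needs care is the interaction between the upper and lower central series --- the identities $[Z_i(L),L]\subseteq Z_{i-1}(L)$ and $[L^{c-i+1},\underbrace{L,\dots,L}_{i}]=L^{c+1}=0$ --- which I would either quote as standard or justify via the short inductive argument indicated above.
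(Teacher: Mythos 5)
Your proof is correct and follows essentially the same route as the paper: both identify $L^2\cap Z_i(L)$ as an ideal inside $L^2$, invoke Lemma~\ref{12} to pin it down as a lower central term $L^{c-k+1}$, and then force $k=i$ by a dimension count. Yours is in fact a bit more complete, since you explicitly verify the inclusion $L^{c-i+1}\subseteq L^2\cap Z_i(L)$ and derive the bound $k\le i$ from $L^{c-k+1+i}=0$, both of which the paper leaves implicit in its contradiction argument.
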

\begin{proof}
By contrary, let $ L^{c-i+1} \subsetneqq L^2 \cap Z_i(L).$ Since $\dim L^{c-i+1}=i,  $ we get $k =\dim ( L^2 \cap Z_{i}(L))\geq i+1. $ Lemma \ref{12} implies $ L^2 \cap Z_{i}(L)=L^{c-k+1} \subseteq L^{c-i+1}.$ Since $\dim L^{c-i+1}=i,$ we have a contradiction. This completes the proof.
\end{proof}
Recall that from \cite{mon} a Lie algebra $ S $ is called a stem Lie algebra if  $ Z(S)\subseteq S^2.$
Now, we are able to prove the following result which is useful in the rest.
\begin{prop}\label{134}
Let $ L $ be an $n$-dimensional Lie algebra of class $ c $ such that $ \dim L^2=c-1. $ Then $L$ is   stem if and only if $  Z(L)=L^c\cong A(1).$
\end{prop}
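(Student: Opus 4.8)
The plan is to prove both implications directly from the definition of a stem Lie algebra ($Z(L)\subseteq L^2$) together with Proposition~\ref{13}, which identifies $L^2\cap Z_i(L)$ with $L^{c-i+1}$.

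For the ``if'' direction, suppose $Z(L)=L^c\cong A(1)$. Then $Z(L)=L^c\subseteq L^2$, so $L$ is stem by definition; this direction is essentially immediate.

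For the ``only if'' direction, assume $L$ is stem, so $Z(L)\subseteq L^2$. First I would observe $Z(L)=L^2\cap Z(L)$, and since $Z(L)=Z_1(L)$, Proposition~\ref{13} applied with $i=1$ gives $Z(L)=L^2\cap Z_1(L)=L^{c-1+1}=L^c$. It remains to check $\dim L^c=1$, i.e. $L^c\cong A(1)$. But the hypothesis $\dim L^2=c-1$ forces $\dim L^j=c-j+1$ for $1\leq j\leq c$ (as already noted in the proof of Lemma~\ref{12}, the chain $L^2\supsetneq L^3\supsetneq\cdots\supsetneq L^c\supsetneq L^{c+1}=0$ has all quotients of dimension at least $1$, and there are exactly $c-1$ of them summing to $\dim L^2=c-1$, so each has dimension exactly $1$); in particular $\dim L^c=c-c+1=1$. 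Since $L^c\subseteq Z(L)$ is abelian of dimension $1$, we get $L^c\cong A(1)$, and combined with $Z(L)=L^c$ this completes the proof.

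I do not anticipate a serious obstacle here: the only mild subtlety is making sure the dimension count $\dim L^j=c-j+1$ is justified cleanly, but this is exactly the elementary fact already used in the proof of Lemma~\ref{12} and can simply be cited. Everything else is a direct application of Proposition~\ref{13} with $i=1$ and the definition of stem.
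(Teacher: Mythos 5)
Your proposal is correct and follows essentially the same route as the paper: the ``only if'' direction is exactly the paper's argument (apply Proposition~\ref{13} with $i=1$ to get $Z(L)=L^2\cap Z(L)=L^c$, then use $\dim L^c=1$ from the chain $L^2\supsetneq\cdots\supsetneq L^c\supsetneq 0$). You additionally spell out the trivial ``if'' direction and the dimension count, which the paper leaves implicit.
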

\begin{proof}
Let $L$ be  stem. Then by Proposition \ref{13}, we have \[L^{c}\subseteq  Z(L)\subseteq L^2\cap Z(L)=L^{c}.\] This implies $ \dim Z(L)=\dim L^{c}= 1,$ as required.
\end{proof}

\begin{thm}\label{48}
Let $L$ be an $n$-dimensional nilpotent stem Lie algebra  of class $4$  $(n\geq 6)$ and $ \dim L^2=3 $ such that $L=I+K,$ where  $ K $ is a subalgebra of $L$ and  $I$ is a maximal class  subalgebra of dimension $ 5 $ such that $[I,K]\subseteq Z(I)=Z(L).$ Then
\begin{itemize}
\item[$(i)$] If $K$ is a non-trivial abelian Lie algebra such that $K\cap I=0,$ then
$[I,K]=Z(L)  $ and $K\cong A(1).$ Moreover, $L=I\rtimes K$
and $ L $ is isomorphic to  $ L_{6,11}$ or $ L_{6,12}.$
\item[$(ii)$] If $ K $ is a Heisenberg Lie algebra  and $I\cap K=Z(L)\cong A(1),$ then $ L $ is isomorphic to one of the following Lie algebras.
\begin{itemize}
\item[$(a)  $]$L\cong I\dotplus K\cong \langle x_1,\ldots,x_5,a,b|[x_1, x_i] = x_{i+1}, [x_2, x_3] =[a,b]= x_5,1\leq i\leq 4\rangle,$ where $I\cong  L_{5,6}$ and $ K\cong H(1). $
\item[$(b)  $]$L\cong I\dotplus K\cong \langle x_1,\ldots,x_5,a,b|[x_1, x_i] = x_{i+1}, [a,b]=x_5,1\leq i\leq 4\rangle,$ where $I\cong  L_{5,7}$ and $ K\cong H(1). $
\item[$(c)  $]$L\cong I\dotplus K\cong I_1\dotplus K_1,$ where $ I_1\cong \langle x_1,\ldots,x_5,a,b|[x_1, x_i] = x_{i+1}, [x_2, x_3] $  $                                                                                                                                                                                        =[a,b]=x_5,1\leq i\leq 4\rangle,$  $ K_1\cong H(m-1) $ and $ n=2m+5$ for all $ m\geq 2. $
\item[$(d)  $]$L\cong I\dotplus K\cong I_2\dotplus K_2,$ where $I_2\cong \langle x_1,\ldots,x_5,a,b|[x_1, x_i] = x_{i+1}, [a,b]=x_5,1\leq i\leq 4\rangle,$  $ K_2\cong H(m-1) $ and $ n=2m+5$ for all $ m\geq 2. $
\end{itemize}
\item[$(iii)  $]If $ A(1) \cong I\cap K=Z(L)=K^2\neq Z(K),$ then $L\cong (I\rtimes A)\dotplus K,$ where  $ K\cong H(m) $ and $A\cong A(1),$  $ [I,A]=Z(L)=Z(I)=K^2$ and $ n=2m+6.$ Moreover, $ I\rtimes A\cong L_{6,11} $ or $ I\rtimes A\cong L_{6,12}. $
\end{itemize}                                           
\end{thm}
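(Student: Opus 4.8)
First I would extract the structure common to all three cases. Fix a minimal generating set $x_1,x_2$ of $I$ and put $x_3=[x_1,x_2]$, $x_4=[x_1,x_3]$, $z=[x_1,x_4]$, so $I\in\{L_{5,6},L_{5,7}\}$, $I^2=\langle x_3,x_4,z\rangle$ is abelian, and $Z(I)=I^4=\langle z\rangle$. Since $I^2\subseteq L^2$ with $\dim I^2=\dim L^2=3$, we have $I^2=L^2$, hence $L^i=I^i$ for $i\ge2$ and $I\trianglelefteq L$ by Lemma~\ref{z}, and $Z(L)=L^4=\langle z\rangle$ by Proposition~\ref{134}. The Jacobi identity gives $[I^2,K]\subseteq[Z(L),I]=0$ and $[I,K^2]\subseteq[Z(L),K]=0$, so for every $v\in K$ the map $u\mapsto[u,v]$ factors through a linear functional $I/I^2\to\langle z\rangle$. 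Two facts about $I$ will be used repeatedly: $(\dagger)$ an element $u\in I$ has $[u,I]\subseteq\langle z\rangle$ exactly when $u\in\langle x_4,z\rangle$, in which case $[u,x_2]=0$ while $[u,x_1]$ is an arbitrary element of $\langle z\rangle$ (because $[x_4,x_1]=-z$); and $(\ddagger)$, the \emph{stem obstruction}: if $v\in K$ satisfies $[v,K]=0$ and $[v,x_2]=0$, then by $(\dagger)$ a correction $v-cx_4$ lies in $Z(L)=\langle z\rangle$, so $v\in\langle x_4,z\rangle\cap K\subseteq I\cap K$.

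For case $(i)$: as $K$ is abelian with $K\cap I=0$, any $v\in K$ with $[v,I]=0$ lies in $Z(L)\subseteq I$, hence is $0$; thus $v\mapsto(u\mapsto[u,v])$ embeds $K$ into $\mathrm{Hom}(I/I^2,\langle z\rangle)$ and $\dim K\le2$. If $\dim K=2$ this embedding is an isomorphism, so some $v\ne0$ has functional vanishing on $\bar x_2$; then $(\ddagger)$ gives $v\in I\cap K=0$, a contradiction, so $\dim K=1$, say $K=\langle k\rangle$. Here $[I,K]\ne0$ (otherwise $K\subseteq Z(L)\subseteq L^2$, impossible since $K\cap L^2=0$), hence $[I,K]=Z(L)$ and $L=I\rtimes K$. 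Writing $[x_1,k]=\alpha z$, $[x_2,k]=\beta z$ with $(\alpha,\beta)\ne(0,0)$, $(\ddagger)$ excludes $\beta=0$; after rescaling and replacing $k$ by $k-\alpha x_4$ we reach $[x_1,k]=0$, $[x_2,k]=z$, all remaining brackets vanishing. Relabelling generators identifies $L$ with $L_{6,11}$ if $I\cong L_{5,6}$, and with $L_{6,12}$ if $I\cong L_{5,7}$.

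For case $(ii)$: $K\cong H(m)$ with $K^2=Z(K)$ one-dimensional, and $K^2\subseteq K\cap I=Z(L)$ forces $K^2=\langle z\rangle$ and $\dim L=2m+5$. The plan is to rechoose the decomposition so that $[I,K]=0$. The bracket descends to a pairing $I/I^2\times K/K^2\to\langle z\rangle$; since the induced alternating form on $K/K^2$ is non-degenerate, there is $\hat u_0\in K$ with $[w,x_2+\hat u_0]=0$ for all $w\in K$. Setting $\tilde x_2=x_2+\hat u_0$, $\tilde x_3=[x_1,\tilde x_2]$, $\tilde x_4=[x_1,\tilde x_3]$ (which equals $x_4$), one checks $\tilde I:=\langle x_1,\tilde x_2,\tilde x_3,\tilde x_4,z\rangle\cong I$ — the relation $[x_2,x_3]=z$ of $L_{5,6}$ survives since $[\hat u_0,x_3]\in[K,I^2]=0$ — with $\tilde I^2=I^2$ still abelian, and now $[w,\tilde I]\subseteq\langle z\rangle$ is supported on $x_1$ alone. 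Replacing each member $w$ of a symplectic basis of $K/K^2$ by $w'=w+g(\bar w)x_4$, where $g(\bar w)z=[w,x_1]$, preserves the Heisenberg relations (because $[x_4,K]\subseteq[I^2,K]=0$) and kills the action on $\tilde I$, so $\tilde K:=\langle w',z\rangle\cong H(m)$ commutes with $\tilde I$. Since $[\tilde I,\tilde K]=0$ forces $\tilde I\cap\tilde K\subseteq Z(\tilde I)=\langle z\rangle$, comparing dimensions gives $L=\tilde I\dotplus\tilde K$; this is $(a)$ (resp.\ $(b)$) for $m=1$ and, writing $H(m)=H(1)\dotplus H(m-1)$ via Lemma~\ref{fr}, $(c)$ (resp.\ $(d)$) for $m\ge2$, according as $I\cong L_{5,6}$ or $L_{5,7}$.

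For case $(iii)$: a nilpotent Lie algebra with $\dim K^2=1$ and $Z(K)\supsetneq K^2$ is isomorphic to $H(m)\oplus A(r)$ with $r\ge1$. If $a\in Z(K)\setminus K^2$ had $[a,I]=0$, then $a\in Z(L)=\langle z\rangle\subseteq K^2$, absurd; so $[a,I]=\langle z\rangle$, and since $[a,K]=0$, fact $(\ddagger)$ forces $[a,x_2]\ne0$. Were $r\ge2$, subtracting a suitable scalar multiple of one such $a_1$ from an independent $a_2$ would produce an element of $Z(K)\setminus K^2$ annihilating $x_2$, contradicting $(\ddagger)$; hence $r=1$, $K\cong H(m)\oplus\langle a\rangle$, and $n=2m+6$. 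Normalising $[x_2,a]=z$, the subalgebra $I\rtimes\langle a\rangle$ meets the hypotheses of case $(i)$, so it is $L_{6,11}$ or $L_{6,12}$ according as $I\cong L_{5,6}$ or $L_{5,7}$. Finally, replacing each member $g$ of a symplectic basis of the Heisenberg part by $g'=g-q\,a+cx_4$, with $q,c$ chosen so that $[g',x_1]=[g',x_2]=0$, preserves the Heisenberg relations (as $a$ is central in $K$ and $[x_4,K]=0$) and makes $K':=\langle g',z\rangle\cong H(m)$ commute with $I\rtimes\langle a\rangle$; comparing dimensions gives $\big(I\rtimes\langle a\rangle\big)\cap K'=\langle z\rangle=K'^2$, whence $L=\big(I\rtimes\langle a\rangle\big)\dotplus K'$. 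I expect the main obstacle to be precisely these normalisations in $(ii)$ and $(iii)$: absorbing the $K$-action into a modified pair of subalgebras without disturbing the maximal-class chain of $I$ (notably the relation $[x_2,x_3]=z$ in $L_{5,6}$) or the Heisenberg relations of $K$, while verifying at each stage that $I\cap K$ stays one-dimensional so that the resulting sum is a central product of the expected dimension. What makes this work is an asymmetry: the action on $x_1$ can be absorbed by correcting with $x_4\in I^2$, whereas the action on $x_2$ can only be removed by modifying $x_2$ itself using elements of $K$.
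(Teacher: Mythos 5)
Your proof is correct and follows essentially the same route as the paper's: identify $I^2=L^2$ so that $I$ is an ideal with $[I^2,K]=[I,K^2]=0$, absorb the action of $K$ on $x_1$ by corrections along $x_4$, and use the observation that an element of $K$ commuting with $K$ and with $x_2$ is forced into $I\cap K$ (your $(\ddagger)$; the paper's explicit $d_{ij}$ and $a_1'$ computations) to bound $\dim K$ and reduce each case to the listed normal forms. The only substantive difference is in part $(ii)$, where the paper splits $K\cong H(m)$ into $H(1)$-factors via Lemma \ref{fr} and normalizes factor by factor, whereas you use non-degeneracy of the symplectic form on $K/K^2$ to absorb the entire functional $w\mapsto[w,x_2]$ into the single replacement $x_2\mapsto x_2+\hat u_0$ --- a cleaner one-step normalization that also makes transparent why the relation $[x_2,x_3]=x_5$ of $L_{5,6}$ is preserved.
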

\begin{proof}                                                                                                                                                                                                                                                                                                                                                                                                                                                                                                                                                                                                                                                                                                                                                                                                                                                                    
\begin{itemize}
\item[ $(i)$]  We have  $[I,K]\subseteq Z(I)=Z(L),$ so $ I$ is an ideal of $L$ and  $I\cong  L_{5,6}$ or $I\cong  L_{5,7}.$  We know that $ I $ is a Lie algebra of maximal class of dimension $ 4 $ so $ Z(I)=Z(L)=I^4. $  We also have $ \dim L^2=\dim I^2=3. $ Therefore  $ L^2=I^2. $ Since $ K\cap I=0, $ we obtain  $ \dim K=\dim L-\dim I=n-5 $ and so $L=I\rtimes K,$  in which $K\cong A(n-5).$ We claim that $ [I,K ]=Z(I).$  By contrary, assume that  $ [I,K ]=0.  $ Since $ K $ is abelian, we have $ K\subseteq Z(I)=Z(L)\subseteq I.$ Now  $ I\cap K\neq 0,$ so we have  a contradiction. Thus $ [I,K ]=Z(I)=Z(L).$ \\ We claim that $K\cong A(1).  $ \newline
 First assume that $ n=6.$  Lemma \ref{rr112} implies $ L\cong I_1\rtimes K\cong L_{6,11}$ or $L\cong I_2\rtimes K\cong L_{6,12},  $ in which
 $ I_1\cong L_{5,6},$    $ I_2\cong L_{5,7},$  $ [I_1,K]=Z(L)$ and $[I_2,K]=Z(L).$\\
 Now, let $ n\geq 7$  and $ K=\bigoplus_{i=1}^{n-5}\langle z_i\rangle. $ In this case we show that $ K\cap I\neq 0, $ which is
 impossible, so this case cannot occur. By using the Jacobi identity, for all  $ i, $ $ 1\leq i\leq n-5, $ we have
 \begin{align*}
 [z_i,x_3]=[z_i,[x_1, x_2]]=[z_i,x_1, x_2]+[x_2,z_i, x_1]=0
 \end{align*}
 and
 \begin{align*}
 [z_i,x_4]=[z_i,[x_1, x_3]]=[z_i,x_1, x_3]+[x_3,z_i, x_1]=0
 \end{align*}
 since $ [z_i,x_1], [z_i,x_3]$  and $[x_2,z_i] $ are central. Thus $[z_i,x_3]=[z_i,x_4]= 0$ for all $ i, $ $ 1\leq i\leq n-5.$ Now, let $[z_i,x_1]= \alpha_i x_6$ and $ \alpha_i\neq  0 $ for all $ i, $ $ 1\leq i\leq n-5.$ Putting $ z_i'=z_i+\alpha_i x_4,$ we have $[z_i',x_1]=[z_i+\alpha_i x_4,x_1]=[z_i,x_1]+\alpha_i [x_4,x_1]= \alpha_i x_6-\alpha_i x_6=0. $ Also we obtain $ [z_i',x_4]=0.$ Thus $[z_i',x_1]=[z_i',x_3]=[z_i',x_4]= 0$ for all $i$, $ 1\leq i\leq n-5.$
  Now let $ [z_i',x_2]= \alpha x_6$ and  $ [z_j',x_2]=\beta x_6,$ in which $ \alpha\neq 0 $ and $ \beta\neq 0 $  for some fixed $ i, j, $ with $ i\neq j.$
 Put $ d_{ij}=\beta z_i'-\alpha z_j'.$ We have
 \begin{align*}
 [d_{ij},x_2]=[\beta z_i'-\alpha z_j',x_2]=\beta \alpha x_6-\beta \alpha x_6=0
 \end{align*}
and so  $[d_{ij},x_2]=0.$ On the other hand, $ [d_{ij},x_1]=[d_{ij},x_2]=[d_{ij},x_3]=[d_{ij},x_4]=0. $ Therefore $ [d_{ij},I]=0 $ and hence $ d_{ij}\in Z(L)=Z(I)= \langle x_6\rangle.$ Since
 \begin{align*}
 &d_{ij}=\beta z_i'-\alpha z_j'=\beta(z_i-\alpha_i x_4)-\alpha ( z_j-\alpha_j x_4)\\&=\beta z_i- \alpha z_j + (\alpha\alpha_j-\beta\alpha_i)x_4\in Z(I),
 \end{align*}
  $0\neq  \beta z_i- \alpha z_j\in K\cap I=0,$ which is a contradiction.  Thus $ n=5, $ $ K\cong A(1), $   $L=I\rtimes \langle z_1\rangle$ and $ [x_2,z_1]=x_6, $ as required. Now, considering the classification of nilpotent Lie algebras of dimension $ 6$ with $ \dim L^2=3 $ which is given in \cite{Gr}, using Lemma \ref{rr112} and our assumption, we should have $L\cong L_{6,11}$ or $L\cong L_{6,12}.$
  \item[$ (ii) $]Since  $I\cap K=Z(L)\cong A(1),$  we have $I\cap K=Z(L)=Z(K) $ and $\dim (K)=\dim (L)-\dim (I)+\dim (I\cap K)=n-5+1=n-4. $    Now since   $2m+1=\dim (K)=n-4,$ we have $ n=2m+5. $ We are going to show that  $ [I,K]=0. $ In fact, we show that there exists $I_1\cong  L_{5,6}$ or $I_1\cong  L_{5,7}$ and $ K_2\cong H(m)$ with $ [I_1,K_2]=0 $ and $ L=I_1\dotplus K_2. $
 First let $ m=1. $ We have $ \dim L=6 $ and $ K=\langle x,y,x_6|[x,y]=x_6\rangle, $ since $ K\cong H(1).$
  By using the Jacobi identity, we have
 \begin{align*}
& [x,x_3]=[x,[x_1, x_2]]=[x,x_1, x_2]+[x_2,x, x_1]=0,\\&
 [x,x_4]=[x,[x_1, x_3]]=[x,x_1, x_3]+[x_3,x, x_1]=0,\\&
  [y,x_3]=[y,[x_1, x_2]]=[y,x_1, x_2]+[x_2,y, x_1]=0
  \end{align*}
 and
 \begin{align*}
 [y,x_4]=[y,[x_1, x_3]]=[y,x_1, x_3]+[x_3,y, x_1]=0.
 \end{align*}
 Thus $ [x,x_3]=[x,x_4]=[y,x_3]=[y,x_4]=0. $
  Now, let $ [x,x_1]= \alpha x_6$ and  $ [y,x_1]=\beta x_6$ for some scalars $ \alpha  $ and $ \beta.  $ Then by taking $ x'=x+\alpha x_4 $ and $ y'=y+\beta x_4,$
 we have $[x',x_1]=[y',x_1]=0.  $  Without loss of
generality, let $[y',x_2]=0 $ and assume that $[x',x_2]=\eta x_6$ for a scalar $ \eta. $ By taking
$x_2'=x_2-\eta y',  $ we have $ [x',x_2']=0. $ Thus
 $ L=I\dotplus K, $ where $ K\cong H(1) $ and $ I $ is a maximal class Lie algebra of dimension $ 5. $
Thus $ L\cong \langle x_1,\ldots,x_5,a,b|[x_1, x_i] = x_{i+1}, [x_2, x_3] =[a,b]=x_5,1\leq i\leq 4\rangle$
or
$L\cong \langle x_1,\ldots,x_5,a,b|[x_1, x_i] = x_{i+1}, [a,b]=x_5,1\leq i\leq 4\rangle.$ It completes
 cases $ (a) $ and $ (b)$ of $ (ii).$\\
  Now, let $ m\geq 2$ and $ H(m)=\langle a_1,b_1,\ldots, a_m,b_m,z\big{|}[a_l,b_l]=z,1\leq
l\leq m\rangle.$ Lemma \ref{fr} implies that $H(m)=T_1\dotplus \ldots \dotplus T_m,$ in which  $T_i\cong H(1) $
 for all  $i, 1\leq i \leq m.$ With the same procedure used in  case $ (i) $ and changing the variables we can see that $ [T_i,I]= 0$ for all $i$, $ 1\leq i\leq m.$
So $[I,K]=0$ and hence  $ L=I \dotplus K.$ Since $ m\geq 2, $ we have $ L=(I\dotplus T_1)\dotplus (T_2 \dotplus \ldots \dotplus T_m)$ such that
$ I\dotplus T_1\cong \langle x_1,\ldots,x_5,a,b|[x_1, x_i] = x_{i+1}, [x_2, x_3] =[a,b] =x_5,1\leq i\leq 4\rangle,$  $ K_1\cong H(m-1) $
or
$I\dotplus T_1\cong \langle x_1,\ldots,x_5,a,b|[x_1, x_i] = x_{i+1}, [a,b]=x_5,1\leq i\leq 4
\rangle$ and $T_2 \dotplus \ldots \dotplus T_m\cong H(m-1).  $ It completes cases $ (c) $ and $ (d)$ of $ (ii).$
\item[$ (iii) $] Since
 $I\cap K=K^2=Z(L)=Z(I)\cong A(1),$  $\dim (K)=\dim (L)-\dim (I)+\dim (I\cap K)=n-5+1=n-4. $ We know that $ \dim  K^2=1, $ so \cite[Theorem 3.6]{ni3} implies  $ K\cong K_1\oplus A, $ in which $K_1\cong H(m)$ and $A\cong A(n-2m-5).$
  Since $ A\neq 0,$ we have  $n\neq 2m-5.$ Thus $L=I+( K_1\oplus A)$ such that $[I,K]\subseteq Z(L)=Z(I).$ We are going to show that $ A\cong A(1), $ $ [I,K_1]=0 $ and $ [I,A]=Z(I)=Z(L). $ Similar to  part $ (ii), $ we can see that  $[I,K_1]=0.$ We claim that  $ [I,A]\neq 0.  $ By contrary, let  $[A,K_1]=[I,A]=0,  $ so $ A\subseteq Z(L)=Z(I). $ Since  $ A\cap I=0,$ we have $ A=0, $ which is a contradiction. So we have $ [I,A]=Z(L).$ We claim that $ \dim A=1. $ Let  $ \dim A\geq 2. $ Similar to the proof of  part $ (i),$ we have    $ [a_1,x_1]=[a_2,x_1]=[a_1,x_3]=[a_2,x_3]=[a_1,x_4]=[a_2,x_4]=0$  where $ a_1,a_2\in A $ and $a_1\neq a_2.  $ Now let $[a_1,x_2]= \alpha x_5$ and  $ [a_2,x_2]=\beta x_5$ for some non-zero scalars  $ \alpha $ and $ \beta. $
 Putting $ a_1'=\beta a_1-\alpha a_2.$ We have
$ [a_1',x_2]=[\beta a_1-\alpha a_2,x_2]=\beta \alpha x_5-\beta \alpha x_5=0$
and so  $[a_1',x_2]=0.$ Hence $ [a_1',x_1]=[a_1',x_2]=[a_1',x_3]=[a_1',x_4]=0. $ Therefore $ [a_1',I]=0 $ and hence $ a_1'\in Z(L)=Z(I)= \langle x_5\rangle=K_1^2.$ So $ a_1'\in K_1 $ and since $ K_1\cap A=0,$ we have  a contradiction. Hence $ A\cong A(1) $ and so $n=2m+6.$
   Thus $ L=(I\rtimes A)\dotplus K_1 $ such that  $[I,A]=Z(L)=Z(I).$ By using part $(i),$ we have $ I\rtimes A\cong L_{6,11} $ or $ I\rtimes A\cong L_{6,12}. $ The proof of case $ (iii) $ is completed.
\end{itemize}
\end{proof}
\begin{prop}\label{481}
Let $L$ be an $n$-dimensional nilpotent stem Lie algebra  of class $4$  $(n\geq 7)$ and $ \dim L^2=3 $ such that $L=I+K,$ where $I$ and $ K $ are two subalgebras of $L,$  $I\cong L_{6,13}$  and  $[I,K]\subseteq Z(I)=Z(L)\cong A(1).$ Then
\begin{itemize}
\item[$(i)$] If $K$ is a non-trivial abelian Lie algebra such that $K\cap I=0,$ then
$[I,K]=Z(L)  $ and $K\cong A(1).$ Moreover, $L=I\rtimes K\cong  \langle x_1,\ldots,x_6,x_7| [x_1, x_2] = x_3, [x_1, x_3] = [x_2, x_4]=x_5, [x_1,x_5]=[x_3, x_4]=[x_2,x_7]=[x_4,x_7]=x_6\rangle.$
\item[$(ii)$] If $ K $ is a Heisenberg Lie algebra  and $I\cap K=Z(L)\cong A(1),$ then $ L $ is isomorphic to one of the following Lie algebras.
\begin{itemize}
\item[$(a)  $]  $L\cong I\dotplus K\cong \langle x_1,\ldots,x_6,a,b| [x_1, x_2] = x_3, [x_1, x_3] = [x_2, x_4]=x_5, [x_1,x_5]=[x_3, x_4]=[a,b]=x_6\rangle,$
 where $ K\cong H(1). $
  \item[$(b)  $]  $L\cong I\dotplus K\cong I_1\dotplus I_2,$ where $ I_1\cong\langle x_1,\ldots,x_6,a,b$  $| [x_1, x_2] = x_3, [x_1, x_3] = [x_2, x_4]=x_5, [x_1,x_5]=[x_3, x_4]=[a,b]=x_6\rangle,$  $ I_2\cong H(m-1)$ and $n=2m+6$ for all $ m\geq 2. $
\end{itemize}
\item[$(iii)  $]If $ A(1) \cong I\cap K=Z(L)=K^2\neq Z(K),$  then $L=(I\rtimes A)\dotplus K,$ where  $ K\cong H(m), $  $A\cong A(1),$  $ [I,A]=Z(L)=Z(I)=K^2$ and $ n=2m+7.$ Moreover, $ I\rtimes A\cong \langle x_1,\ldots,x_6,x_7| [x_1, x_2] = x_3, [x_1, x_3] = [x_2, x_4]=x_5, [x_1,x_5]=[x_3, x_4]=[x_2,x_7]=[x_4,x_7]=x_6\rangle. $
\end{itemize}
\end{prop}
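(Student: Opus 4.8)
The plan is to mimic closely the structure of the proof of Theorem~\ref{48}, since the hypotheses differ only in that $I$ is now the $6$-dimensional maximal-class algebra $L_{6,13}$ rather than a $5$-dimensional maximal-class algebra. First I would record the structural facts about $I\cong L_{6,13}$ that will be used throughout: $I$ has maximal class, so $\dim I^2=5$ (but here we only use $\dim L^2=3$, so in fact $L^2=I^2\cap$-something must be reconciled — more precisely $I^2=\langle x_3,x_4,x_5,x_6\rangle$ has dimension $4$... so one must check $\dim L^2 = 3$ forces the relations), $Z(I)=I^4=\langle x_6\rangle\cong A(1)$, and the key bracket relations $[x_1,x_2]=x_3$, $[x_1,x_3]=[x_2,x_4]=x_5$, $[x_1,x_5]=[x_3,x_4]=x_6$. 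Wait — since $\dim L_{6,13}^2=4\neq 3$, the correct reading is that $I$ sits inside $L$ with $L^2$ of dimension $3$; I would verify at the outset, using Lemma~\ref{z} with $H=I$, that $L^i=I^i$ for $i\geq 2$ is \emph{not} what is wanted, but rather that the hypothesis $[I,K]\subseteq Z(I)$ together with $\dim L^2=3$ pins down the embedding. In practice I expect the intended setup is that $I$ contributes a $3$-dimensional derived subalgebra to $L$, and I would state the needed centrality relations for the generators $x_3,x_4,x_5,x_6$ of $I$ explicitly before starting the case analysis.

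For part $(i)$: assume $K$ abelian, $K\cap I=0$. As in Theorem~\ref{48}(i), $[I,K]\subseteq Z(I)=Z(L)$ makes $I$ an ideal, $\dim L^2=\dim I^2=3$ forces $L^2=I^2$, and $\dim K=n-6$. I would first show $[I,K]=Z(I)$: if $[I,K]=0$ then abelianness of $K$ gives $K\subseteq Z(I)\subseteq I$, contradicting $K\cap I=0$. Then the heart is showing $\dim K=1$. Write $K=\bigoplus_{i=1}^{n-6}\langle z_i\rangle$; using the Jacobi identity exactly as in Theorem~\ref{48}(i) — exploiting that $[z_i,x_1]$, $[z_i,x_3]$, $[z_i,x_5]$, $[x_2,z_i]$, $[x_4,z_i]$ all lie in $Z(L)=\langle x_6\rangle$ — I would deduce $[z_i,x_3]=[z_i,x_5]=[z_i,x_6]=0$, then absorb the $x_1$-component by replacing $z_i$ with $z_i+\lambda_i x_5$ (so that $[z_i',x_1]=0$), and similarly arrange the brackets with $x_3,x_4,x_5$ to vanish. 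If $n-6\geq 2$ then combining two such modified generators $z_i',z_j'$ with a scalar combination $d_{ij}=\beta z_i'-\alpha z_j'$ killing the $x_2$- (resp.\ $x_4$-) bracket yields $d_{ij}\in Z(L)=Z(I)$, hence a nonzero element of $K\cap I$, a contradiction. So $n=7$, $K\cong A(1)$, and reading off the classification of $7$-dimensional algebras (or just the explicit relations forced by $[I,z_1]=Z(I)$, where the only freedom after normalization is $[x_2,z_1]=x_6$ or $[x_4,z_1]=x_6$, which turn out to give isomorphic algebras because of the automorphism swapping the roles suitably) gives the single algebra listed.

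For part $(ii)$: $K\cong H(m)$, $I\cap K=Z(L)=Z(K)\cong A(1)$. Then $\dim K=n-5$, so $2m+1=n-5$, i.e.\ $n=2m+6$. The goal is $[I,K]=0$. For $m=1$, $K=\langle x,y,x_6\mid[x,y]=x_6\rangle$; Jacobi identities as above give $[x,x_3]=[x,x_5]=[y,x_3]=[y,x_5]=0$ and similar, then replacements $x'=x+\alpha x_5$, $y'=y+\beta x_5$ kill the $x_1$-brackets, and a final adjustment $x_2'=x_2-\eta y'$ (using $[x,y]=x_6$ to trade brackets) kills the remaining one, giving $L\cong I\dotplus K$. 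For $m\geq 2$, apply Lemma~\ref{fr} to write $K=T_1\dotplus\cdots\dotplus T_m$ with each $T_j\cong H(1)$, show $[T_j,I]=0$ for all $j$ by the $m=1$ argument, regroup as $(I\dotplus T_1)\dotplus(T_2\dotplus\cdots\dotplus T_m)$, and identify $T_2\dotplus\cdots\dotplus T_m\cong H(m-1)$ by Lemma~\ref{fr} again. For part $(iii)$: $A(1)\cong I\cap K=K^2=Z(L)=Z(I)\neq Z(K)$; since $\dim K^2=1$, \cite[Theorem 3.6]{ni3} gives $K\cong K_1\oplus A$ with $K_1\cong H(m)$ and $A\cong A(n-2m-6)$ abelian; $A\neq 0$ by hypothesis. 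Show $[I,K_1]=0$ as in $(ii)$; show $[I,A]\neq 0$ (else $A\subseteq Z(I)$, so $A\cap I\neq 0$, contradiction), hence $[I,A]=Z(L)$; then show $\dim A=1$ by the same $d$-combination trick used in $(i)$, forcing $n=2m+7$. Finally $L=(I\rtimes A)\dotplus K_1$ with $I\rtimes A$ computed by part $(i)$, giving the stated algebra.

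\textbf{Main obstacle.} The genuinely delicate point is the bookkeeping in the variable changes: unlike Theorem~\ref{48} where $I$ has dimension $5$ and a single ``long'' relation chain, here $L_{6,13}$ has the \emph{two} independent relations $[x_1,x_3]=[x_2,x_4]=x_5$ and $[x_1,x_5]=[x_3,x_4]=x_6$, so after killing brackets with $x_1$ one must separately handle brackets with both $x_2$ and $x_4$ (and the Jacobi consequences interlock them), and one must be careful that the successive substitutions $z_i\mapsto z_i+\lambda x_5$, $x_2\mapsto x_2-\eta y'$, etc., do not reintroduce nonzero brackets that were previously cleared. I expect that verifying the substitutions are consistent — i.e.\ that the ``triangular'' order of elimination ($x_6$, then $x_5$, then $x_3$, then $x_2$ and $x_4$) actually closes up — is where the real work lies; the isomorphism identifications at the end (e.g.\ that the two normalizations in part $(i)$ collapse to one algebra) are then routine given the classification tables in \cite{cic,Gr}.
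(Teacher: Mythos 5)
Your overall strategy --- transplant the proof of Theorem \ref{48} verbatim with $I\cong L_{6,13}$ in place of a $5$-dimensional maximal class algebra --- is exactly the paper's intention (its entire proof of this proposition is the sentence ``It is similar to the proof of Theorem \ref{48}''). Before addressing the substance, note that your opening paragraph rests on a miscalculation: $L_{6,13}^2=\langle x_3,x_5,x_6\rangle$ is already $3$-dimensional ($x_4$ is a generator, not a commutator), and $L_{6,13}$ is \emph{not} of maximal class (dimension $6$, class $4$). So there is nothing to ``reconcile''; $L^2=I^2$ follows at once from $I^2\subseteq L^2$ and equality of dimensions, and Lemma \ref{z} applies without fuss.

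The serious problem is the step you yourself flag as the ``genuinely delicate point,'' and it does not close up. After the normalizations, an element $z'\in K$ satisfies $[z',x_1]=[z',x_3]=[z',x_5]=[z',x_6]=0$ but still carries \emph{two} independent functionals $z'\mapsto[z',x_2]$ and $z'\mapsto[z',x_4]$ into $\langle x_6\rangle$, because $L_{6,13}$ has three generators $x_1,x_2,x_4$ rather than two. A single scalar combination $d_{ij}=\beta z_i'-\alpha z_j'$ can annihilate one of these functionals but not both simultaneously, so the argument of Theorem \ref{48}$(i)$ only yields $\dim K\le 2$, not $\dim K=1$. Moreover $\dim K=2$ genuinely occurs: adjoin to $L_{6,13}$ two elements $z_1,z_2$ with $[z_1,x_2]=[z_2,x_4]=x_6$ and all other new brackets zero. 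The Jacobi identity is immediate (every new bracket is central and kills $I^2$), and the resulting $8$-dimensional algebra $L=I+\langle z_1,z_2\rangle$ is a stem algebra of class $4$ with $\dim L^2=3$, $Z(L)=\langle x_6\rangle=Z(I)=[I,K]$, and $K=\langle z_1,z_2\rangle\cong A(2)$ abelian with $K\cap I=0$: all hypotheses of part $(i)$ hold, yet $K\not\cong A(1)$. This $L$ is not isomorphic to the only $8$-dimensional algebra the paper lists with central quotient $L_{5,5}\oplus A(2)$, namely $L_{6,13}\dotplus H(1)$ of case $(ii)(a)$, since there $[Z_2(L),Z_2(L)]=\langle x_6\rangle$ whereas in the example $Z_2(L)=\langle x_5,x_6,z_1,z_2\rangle$ is abelian. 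So part $(i)$ as stated (and, by the same mechanism, the claim $\dim A=1$ in part $(iii)$) is false; no completion of your sketch can prove it, and the defect propagates to cases $(n)$, $(r)$ of Theorem \ref{lkl}.
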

\begin{proof}
It is similar to the proof of Theorem \ref{48}.
\end{proof}

W are ready to determine the central factors of all stem Lie algebras $ T $ such that $cl(T)=4$ and $\dim T^2=3.$

  \begin{lem}\label{ggg}
   Let $ T $ be an $ n $-dimensional stem Lie algebra   such that $cl(T)=4$ and $\dim T^2=3.$ Then $T/Z(T)\cong L_{4,3}\oplus  A(n-4) $ or  $T/Z(T)\cong L_{5,5}\oplus  A(n-5).$
  \end{lem}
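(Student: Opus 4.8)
The plan is to identify $T/Z(T)$ as a capable Lie algebra of class $3$ whose derived subalgebra has dimension $2$, and then to invoke Theorem~\ref{26117}. To begin, since $T$ is a stem Lie algebra of class $c=4$ with $\dim T^2=3=c-1$, Proposition~\ref{134} yields $Z(T)=T^4\cong A(1)$, so $\dim Z(T)=1$ and $\dim(T/Z(T))=n-1$. Moreover, as in the proof of Lemma~\ref{12}, the lower central series of $T$ has $\dim T^2=3$, $\dim T^3=2$ and $\dim T^4=1$.

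Next I would work out the structure of $\overline{T}:=T/Z(T)$. Since $Z(T)=T^4\subseteq T^3\subseteq T^2$, the lower central series of $\overline{T}$ satisfies $\overline{T}^2=T^2/Z(T)$ of dimension $3-1=2$, then $\overline{T}^3=T^3/Z(T)$ of dimension $2-1=1\ne 0$, and $\overline{T}^4=T^4/Z(T)=0$; hence $cl(\overline{T})=3$ and $\dim\overline{T}^2=2$. Furthermore $\overline{T}$ is capable: taking $H=T$ in the definition of capability gives $\overline{T}\cong H/Z(H)$.

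It then remains only to apply Theorem~\ref{26117} to the $(n-1)$-dimensional capable Lie algebra $\overline{T}$, for which $cl(\overline{T})=3$ and $\dim\overline{T}^2=2$. That theorem forces $\overline{T}\cong L_{4,3}\oplus A(\,\cdot\,)$ or $\overline{T}\cong L_{5,5}\oplus A(\,\cdot\,)$, and the rank of the abelian direct summand is pinned down by $\dim\overline{T}=n-1$. This gives precisely the two asserted isomorphism types for $T/Z(T)$.

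I do not anticipate a genuine difficulty here; the argument is a short concatenation of results already available. The two places that need a little care are: first, checking that passing to $T/Z(T)$ lowers the nilpotency class by \emph{exactly} one, which rests on $\dim T^3=2>1=\dim T^4=\dim Z(T)$, so that $\overline{T}^3\ne 0$ while $\overline{T}^4=0$; and second, observing that an arbitrary central quotient is automatically capable, which is what lets us feed $\overline{T}$ into Theorem~\ref{26117}'s classification of the capable algebras of class $3$ with two-dimensional derived subalgebra.
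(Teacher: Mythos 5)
Your proposal is correct and follows essentially the same route as the paper's proof: apply Proposition~\ref{134} to get $Z(T)=T^4\cong A(1)$ so that $T/Z(T)$ has two-dimensional derived subalgebra, note that $T/Z(T)$ is capable, and invoke Theorem~\ref{26117}. You merely spell out the verification that the class of $T/Z(T)$ is exactly $3$, which the paper leaves implicit.
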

  \begin{proof}
By using Proposition \ref{134},  we have  $T^2/Z(T)\cong  A(2).$
Since $ T/Z(T) $ is capable, Theorem \ref{26117} implies that $T/Z(T)\cong L_{4,3}\oplus  A(n-4) $ or  $T/Z(T)\cong L_{5,5}\oplus  A(n-5).$ It completes the proof.
  \end{proof}
 In the following theorem, we determine the structure of all stem Lie algebras of class $ 4 $ with the derived subalgebra of dimension $ 3.$
  \begin{thm}\label{lkl}
    Let $ T $ be an $ n $-dimensional stem Lie algebra   such that $cl(T)=4$ and $\dim T^2=3.$ Then $ T $ is isomorphic to one of the following Lie algebras.
\begin{itemize}
\item[$(a)$] $T\cong L_{5,6}.$
 \item[$(b)$]$T\cong L_{5,7}.$
\item[$ (c) $]
$ T\cong  I\rtimes K \cong L_{6,11},$  where $I\cong L_{5,6},$ $K\cong A(1),$  $ [I,K]=Z(I)=Z(T) $ and $Z_2( T) =Z_2( I)\rtimes K. $
\item[$ (d) $]
$ T\cong  I\rtimes K \cong L_{6,12},$  where $I\cong L_{5,7},$ $K\cong A(1),$   $ [I,K]=Z(I)=Z(T) $ and $Z_2( T) =Z_2( I)\rtimes K. $
\item[$(e)  $]$T\cong I_1\dotplus I_2\cong \langle x_1,\ldots,x_5,a,b|[x_1, x_i] = x_{i+1}, [x_2, x_3] =[a,b]= x_5,1\leq i\leq 4\rangle,$ where $I_1\cong  L_{5,6}$ and $ I_2\cong H(1). $
\item[$(f)  $]$T\cong I_1\dotplus I_2\cong \langle x_1,\ldots,x_5,a,b|[x_1, x_i] = x_{i+1}, [a,b]=x_5,1\leq i\leq 4\rangle,$ where $I_1\cong  L_{5,7}$ and $ I_2\cong H(1). $
\item[$(g)  $]$T\cong  I_1\dotplus K_1,$ where $ I_1\cong \langle x_1,\ldots,x_5,a,b|[x_1, x_i] = x_{i+1}, [x_2, x_3] =[a,b] =x_5,1\leq i\leq 4\rangle,$  $ K_1\cong H(m-1) $ and $ n=2m+5$ for all $ m\geq 2. $
\item[$(h)  $]$T\cong I_2\dotplus K_2,$ where $I_2\cong \langle x_1,\ldots,x_5,a,b|[x_1, x_i] = x_{i+1}, [a,b]=x_5,1\leq i\leq 4\rangle,$  $ K_2\cong H(m-1) $ and $ n=2m+5$ for all $ m\geq 2. $
 \item[$ (k )$]
$ T\cong ( I\rtimes  K)\dotplus I_3 ,$ where $ I_3\cong H(m),$  $K\cong A(1),$ $ [I,K]=Z(I)=Z(T), $ $I\cong L_{5,6},$  $ I\rtimes  K\cong L_{6,11},$
$Z_2( T) =(Z_2( I)\rtimes  K)\dotplus I_3$  and $n=2m+6.$
\item[$ (l)$]
$ T\cong ( I\rtimes  K)\dotplus I_4,$ where $ I_4\cong H(m),$  $K\cong A(1),$ $ [I,K]=Z(I)=Z(T), $ $I\cong L_{5,7},$ $I\rtimes  K\cong L_{6,12},$
$Z_2( T) =(Z_2( I)\rtimes  K)\dotplus I_4$  and $n=2m+6.$
\item[$(m)  $] $T\cong L_{6,13}.$
\item[$ (n )$]$ T\cong \langle x_1,\ldots,x_6,x_7| [x_1, x_2] = x_3, [x_1, x_3] = [x_2, x_4]=x_5, [x_1,x_5]=[x_3, x_4]=[x_2,x_7]=[x_4,x_7]=x_6\rangle.$
\item[$(p)$]    $T=I_5\dotplus K\cong \langle x_1,\ldots,x_6,a,b| [x_1, x_2] = x_3, [x_1, x_3] = [x_2, x_4]=x_5, [x_1,x_5]=[x_3, x_4]=[a,b]=x_6\rangle, $  where $I_5\cong L_{6,13}$ and
 $ K\cong H(1). $
 \item[$ (q) $]$T\cong I_6\dotplus I_7,$ where $ I_6\cong\langle x_1,\ldots,x_6,a,b| [x_1, x_2] = x_3, [x_1, x_3] = [x_2, x_4]=x_5, [x_1,x_5]=[x_3, x_4]=[a,b]=x_6\rangle$ and $ I_7\cong H(m-1)$  for all $ m\geq 2. $
\item[$(r)  $] $T\cong (I_8\rtimes A)\dotplus K_3,$ where  $ K_3\cong H(m), $ $ I_8\cong L_{6,13},$ $A\cong A(1),$  $ [I_8,A]=Z(I_8)=K^2$ and $ n=2m+7.$ Moreover, $ I_8\rtimes A\cong \langle x_1,\ldots,x_7| [x_1, x_2] = x_3, [x_1, x_3] = [x_2, x_4]=x_5, [x_1,x_5]=[x_3, x_4]=[x_2,x_7]=[x_4,x_7]=x_6\rangle. $
\end{itemize}
  \end{thm}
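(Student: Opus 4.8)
The plan is to route every such $T$ into the already-established Theorem~\ref{48} or Proposition~\ref{481}; the only genuine work is to locate inside $T$ the distinguished subalgebra ``$I$'' that those results take as input. Since $T$ is stem of class $4$ with $\dim T^2=3$, Proposition~\ref{134} gives $Z(T)=T^4\cong A(1)$, and Lemma~\ref{ggg} then leaves exactly two possibilities: $T/Z(T)\cong L_{4,3}\oplus A(\ast)$ (Case~A) or $T/Z(T)\cong L_{5,5}\oplus A(\ast)$ (Case~B). Fix the corresponding decomposition $T/Z(T)=\bar I\oplus\bar J$ into ideals, with $\bar I$ the indecomposable summand ($L_{4,3}$ or $L_{5,5}$), and let $I,J\le T$ be the full preimages. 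Then $I,J$ are subalgebras containing $Z(T)$, so $\dim I=5$ in Case~A and $\dim I=6$ in Case~B; moreover $I\cap J=Z(T)$, and since $[\bar I,\bar J]=0$ and $\bar J$ is abelian we have $[I,J]\subseteq Z(T)$ and $J^2\subseteq Z(T)$.

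First I would pin down $I$. Expanding $T^2,T^3,T^4$ along $T=I+J$ and using the Jacobi identity together with $[I,J]\subseteq Z(T)$ (which yields $[J,I^2]=0$ and then $[J,I^3]=0$), one gets
\[
T^2=I^2+Z(T),\qquad T^3=I^3,\qquad T^4=I^4.
\]
Since $Z(T)=T^4=I^4\subseteq I^2$, the first identity reads $T^2=I^2$; hence $\dim I^2=3$ and $cl(I)=4$ (as $I^4\ne0$ while $I^5\subseteq T^5=0$). Because $Z(\bar I)\subseteq\bar I^2$ for both $\bar I\in\{L_{4,3},L_{5,5}\}$, and $\ker(I\to\bar I)=Z(T)\subseteq I^2$, this forces $Z(I)\subseteq I^2$, so $I$ is stem and Proposition~\ref{134} gives $Z(I)=I^4=Z(T)$. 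In Case~A a $5$-dimensional Lie algebra of class $4$ is of maximal class, hence $I\cong L_{5,6}$ or $I\cong L_{5,7}$. In Case~B, Lemma~\ref{rr112} gives $I\in\{L_{6,11},L_{6,12},L_{6,13}\}$, and since $L_{6,11}/Z(L_{6,11})\cong L_{6,12}/Z(L_{6,12})\cong L_{4,3}\oplus A(1)$ whereas $L_{6,13}/Z(L_{6,13})\cong L_{5,5}\cong I/Z(I)$, we conclude $I\cong L_{6,13}$.

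Next I would read off a subalgebra $K$ from $J$ realizing one of the hypotheses of Theorem~\ref{48} (Case~A, $n\ge6$) or Proposition~\ref{481} (Case~B, $n\ge7$). If $J=Z(T)$ then $T=I$, producing items $(a),(b)$ (if $n=5$) and $(m)$ (if $n=6$). Otherwise $\bar J\ne0$ and $\dim J^2\le1$ with $J^2\subseteq Z(T)$. If $J$ is abelian, a vector-space complement of $Z(T)$ in $J$ gives $J=Z(T)\oplus K$ with $0\ne K\cong A(n-\dim I)$ and $K\cap I=0$, which is the situation of $(i)$. If instead $\dim J^2=1$ then $J^2=Z(T)$ (as $J^2\subseteq Z(T)$ and $\dim J^2=1=\dim Z(T)$), and, $J$ having a $1$-dimensional derived subalgebra, $J\cong H(m)\oplus A(k)$; taking $K:=J$ we are in the situation of $(ii)$ when $k=0$ (so $I\cap K=Z(T)=Z(K)$) and of $(iii)$ when $k\ge1$ (so $I\cap K=Z(T)=K^2\ne Z(K)$). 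In every case $[I,K]\subseteq[I,J]\subseteq Z(T)=Z(I)$, so Theorem~\ref{48} or Proposition~\ref{481} applies and delivers precisely the remaining items of the list, simultaneously fixing $n$ (to $6,\,7,\,2m+5,\,2m+6$ or $2m+7$ according to the subcase). The data on $Z_2(T)$ in $(c),(d),(k),(l)$ then drops out of the explicit semidirect/central-product descriptions, and one closes by checking pairwise non-isomorphism, sorting first by $\dim T$ and then by $T/Z(T)$ and by whether $T$ splits as a nontrivial central product.

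The hard part will be the cluster of Jacobi-identity computations behind $T^i=I^i$, together with making the trichotomy for $J$ (namely $J=Z(T)$, $J$ abelian, $\dim J^2=1$) airtight and verifying that the extracted $K$ always meets one of the three hypotheses of Theorem~\ref{48} and Proposition~\ref{481}; a secondary irritation is reconciling the dimension formulas $n=2m+5,\,2m+6,\,2m+7$ coming out of those results with the constraints $\dim I\in\{5,6\}$ and $\dim J=\dim T-\dim I+1$ (for instance, in the abelian subcase the conclusion $K\cong A(1)$ shows that configuration can occur only when $\dim J=2$). Everything past that is bookkeeping, the substantive structural work being already contained in Theorem~\ref{48} and Proposition~\ref{481}.
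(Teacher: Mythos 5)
Your proposal is correct and follows essentially the same route as the paper: pass to $T/Z(T)$ via Proposition \ref{134} and Lemma \ref{ggg}, lift the decomposition to ideals $I,J$ of $T$ with $I\cap J=Z(T)$, identify $I$ (maximal class of dimension $5$, resp.\ $L_{6,13}$), split $J$ according to whether it is central, abelian, or has one-dimensional derived subalgebra, and feed the result into Theorem \ref{48} and Proposition \ref{481}. The only cosmetic difference is that the paper obtains $T^i=I^i$ by quoting Zack's Lemma \ref{z} from $T^2=I^2+Z(T)=I^2+T^3$, where you rederive it by direct Jacobi computation.
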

\begin{proof}
Since $ cl(T)=4, $ we have $\dim T\geq 5.  $ If $\dim T=5,$ then $ T $ is a   $5$-dimensional Lie algebra of maximal class  and hence $T\cong L_{5,6}$ or $T\cong L_{5,7}.$  \\ Assume that $\dim T\geq 6.$
 Thus $ \dim T/Z(T)\geq 5 $ and so $ \dim I_2\geq 2. $
Proposition \ref{134} implies $T/Z(T)\cong L_{4,3}\oplus  A(n-4) $ or  $T/Z(T)\cong L_{5,5}\oplus  A(n-5)$  and $Z(T)=T^4\cong A(1).$
 First let $T/Z(T)\cong L_{4,3}\oplus  A(n-4). $
There exist two ideals $ I_1/Z(T)$ and $ I_2/Z(T)$   of $ T/Z(T)$ such that
$ I_1/Z(T)\cong L_{4,3}$ and
$_2/Z(T)\cong A(n-5).$
Since $ T^2/Z(T)=\big{(}I_1^2+Z(T)\big{)}/Z(T),  $ we have $ T^2=I_1^2+Z(T)  $ and $ Z(T)=T^4.$
Using Lemma \ref{z}, we have $ T^2=I_1^2 $ and so $cl(T)=cl(I_1)=4.$ Hence $ I_1$ is a $5$-dimensional Lie algebra of maximal class  and so    $I_1\cong L_{5,6}$ or $I_1\cong L_{5,7}.$ Now, $ Z(T)=Z(I_1)$ because $Z(T)\cap   I_1\subseteq Z( I_1)  $ and $ \dim Z(T)=1. $ Since $ Z(T)\subseteq   I_1\cap I_2 \subseteq Z(T),$ we have $ I_1\cap I_2= Z(T)=Z(I_1). $
Hence
 \[Z_2(T)/Z(T)=Z(T/Z(T))=Z(I_1/Z(T))\oplus I_2/Z(T)=Z_2(I_1)/Z(T)+I_2/Z(T).\]  Since $ I_1 $ is maximal class of dimension $ 5, $ we have
   $Z_2( T) =Z_2( I_1)+I_2=I_1^3+I_2.$
Now, we are going to determine the structure of $ I_2. $ Since $I_2/Z(T)\cong A(n-5),$ we have
 $I_2^2\subseteq Z(T)\cong A(1)$ and hence  $cl(I_2) \leq 2.$
  Let $cl(I_2)=1.$ Therefore $[I_1,I_2]=I_1\cap I_2=Z(T)=Z(I_1)\cong A(1).$ Otherwise $[I_1,I_2]=0$  and since $  I_2$ is abelian,  $ I_2\subseteq Z(T)\cong A(1).$ It  is a contradiction, since $ \dim I_2\geq 2. $ Hence  $ I_2=Z(T) \oplus A$ in which $A\cong A(n-5)$ and $[I_1,I_2]=Z(T).$ Now $ Z(T)\subseteq I_1, A\cap I_1=0 $ and $ I_1\cap I_2 = Z(T) $ so $ T= I_1+I_2=I_1+Z(T)+A=I_1\rtimes A.$ Using the proof of Theorem \ref{48} $ (i),$  we have $ T\cong I_1\rtimes K\cong L_{6,11} $ in which $K\cong A(1)$ and $ [K,I_1]=Z(T) $ or $ T\cong I_1\rtimes K\cong L_{6,12} $ in which $K\cong A(1)$ and $ [K,I_1]=Z(T). $  \\ Now, let $ cl(I_2)=2.$ Since $I_2^2= I_1\cap I_2=Z(T)=Z(I_1)\cong A(1),$   \cite[Theorem 3.6]{ni3} implies $I_2\cong H(m)\oplus A(n-2m-5).$ First, assume that $ A(n-2m-5)=0.$ Then $n=2m+5$ and $ I_2\cong H(m).$ Using Theorem \ref{481} $ (ii),$ we can similarly obtain that $[I_1,I_2]=0$  and $ T=I_1\dotplus I_2, $ where $ I_2\cong H(m).  $ This is  cases $ (e), $ $(f),  $ $ (g) $ and  $ (h). $\\
  If  $  A(n-2m-5)\neq 0,$ then $n\neq 2m+5$ and hence $T=I_1+(K\oplus A)$ where $K\cong H(m)$ and  $A\cong A(n-2m-5)$ and $[I_1,K\oplus A]\subseteq Z(T)=Z(I_1).$ Using Theorem \ref{481} $ (ii),$ we have  $[I_1,K]=0.$ Now, we claim that $ [I_1,A]=Z(T)\cong A(1). $ Let $ [I_1,A]=0. $ Since $ [K,A]=0, $ we have  $ A\subseteq Z(T)=Z(I_1)=Z(K)=K^2\cong A(1).$ It is a contradiction, since $ A\cap K=0. $ Therefore $[I_1,A]=Z(T)  $ and hence $ T\cong (I_1\rtimes A)\dotplus K $ where $ A\cong A(n-2m-5)$ and  $ [I_1,A]=Z(T)=Z(I_1).$ Similar to cases $ (c) $ and  $ (d),$ one can obtain  $ A\cong A(1), $ so  $n=2m+6  $ and  $ [I_1,A]=Z(T). $ So $ T=(I_1\rtimes A)\dotplus K $ in which $ A\cong A(1) $ and $ [I_1,A]=Z(T). $ This is  cases $ (k) $ and  $ (l). $\\
      Now let $T/Z(T)\cong L_{5,5}\oplus  A(n-5).$
   Similarly, we can obtain all cases $ (m),$  $(n),$  $(p),$ $(q)$  and $(r),$ by using Proposition \ref{481}.
   The result follows.
\end{proof}
The capable stem Lie algebras of class $ 4 $ with the derived
subalgebra of dimension $ 3 $ are characterized  in the following.
\begin{lem}\label{cc1}
Let $ L $ be an $ n $-dimensional stem Lie algebra   such that $cl(L)=4$  and $\dim L^2=3.$ If $ L $ is isomorphic to one of the Lie algebras   $L\cong L_{5,6},  $  $ L\cong L_{5,7}, $ $L\cong L_{6,11}, $  $L\cong L_{6,12} $ or
$L\cong L_{6,13}, $ then $ L $ is   capable.
\end{lem}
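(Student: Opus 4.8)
The plan is to prove capability of each of these five Lie algebras by exhibiting, for each $L$ in the list, a Lie algebra $H$ together with an explicit isomorphism $L\cong H/Z(H)$; equivalently, by the result of Niroomand et al.\ quoted in the Preliminaries, it suffices to show $Z^{\wedge}(L)=Z^{*}(L)=0$. In practice the cleanest route is the former: for the maximal-class algebras $L_{5,6}$ and $L_{5,7}$ one notes that each arises as a quotient of a maximal-class algebra in one dimension higher. Indeed $L_{5,7}\cong L_{6,?}/Z$ where the covering algebra is a $6$-dimensional maximal-class Lie algebra with an extra generator $x_6$ and relation $[x_1,x_5]=x_6$ (so $L_{6,12}$ restricted appropriately, or the free-nilpotent quotient), and similarly $L_{5,6}$ lifts by adjoining $x_6$ with $[x_1,x_5]=[x_2,x_4]=x_6$. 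For $L_{6,11},L_{6,12},L_{6,13}$ one adjoins a seventh basis element $x_7$ in $Z_2$ and a suitable bracket landing in the top term $L^4$, exactly the kind of extension already written down in Lemma~\ref{rr112} and in parts $(n)$ of Theorem~\ref{lkl}; for instance $L_{6,13}\cong H/Z(H)$ for the $7$-dimensional algebra $H$ of case $(n)$, since there $Z(H)=\langle x_6\rangle$ and $H/\langle x_6\rangle\cong L_{6,13}$ after relabelling.

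First I would fix, for each target algebra $L$, the candidate cover $H$ by writing its presentation explicitly (these presentations are all available in the $6$- and $7$-dimensional classification lists already cited, \cite{cic,Gr}, and several appear verbatim earlier in this paper). Second I would verify that $H$ is indeed a Lie algebra of the expected dimension and nilpotency class by checking the Jacobi identity on the defining brackets --- this is routine and I would not belabour it. Third I would compute $Z(H)$: in every case the center is the one-dimensional top term $H^{c}$ (this is forced by Proposition~\ref{134} once one knows $H$ is stem with $\dim H^2 = c-1$, so it is essentially automatic), and then I would check that $H/Z(H)\cong L$ by matching generators and relations. Running this for all five algebras finishes the proof.

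The step I expect to be the genuine (if mild) obstacle is producing the covers uniformly: for $L_{5,6}$ one must be a little careful because its Schur multiplier / exterior square is larger than that of $L_{5,7}$, so the naive one-relation lift might not have center exactly one-dimensional --- one may instead want the cover given by $L_{6,13}$ itself (note $L_{6,13}/\langle x_6\rangle$ need not be $L_{5,6}$, so the right cover has to be chosen with care) or by the appropriate $7$-dimensional stem algebra from Theorem~\ref{lkl}$(n)$. An alternative that sidesteps the bookkeeping is to argue via the exterior center directly: for each $L$ one shows that no nonzero $l\in Z(L)=L^4$ can satisfy $l\wedge l'=0$ for all $l'$, by exhibiting a single $l'$ (a generator such as $x_1$) with $l\wedge x_1\neq 0$ in $L\wedge L$; computing $L\wedge L$ for these small algebras is short. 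Either way the work is finite and explicit, so I would present whichever is shorter per algebra and simply tabulate the covers.
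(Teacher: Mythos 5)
Your overall strategy --- exhibiting for each $L$ an explicit cover $H$ with $L\cong H/Z(H)$ --- is exactly the paper's, and your covers for the two maximal-class algebras are the right ones: adjoining $x_6$ with $[x_1,x_5]=x_6$ (resp.\ $[x_1,x_5]=[x_2,x_4]=x_6$) gives the algebras $L_{6,18}$ and $L_{6,15}$ of \cite{cic,Gr}, whose central quotients are $L_{5,7}$ and $L_{5,6}$. But the recipe you give for the three six-dimensional algebras contains a genuine error. You propose to adjoin a seventh generator $x_7$ with a bracket landing in the top term $L^4=\langle x_6\rangle$. Any such $H$ still has nilpotency class $4$, hence $H^4\subseteq Z(H)$, so passing to $H/Z(H)$ necessarily kills $x_6$ and yields an algebra of class at most $3$; it can never be $L_{6,11}$, $L_{6,12}$ or $L_{6,13}$, which have class $4$. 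In particular the algebra of Theorem~\ref{lkl}$(n)$ that you name as a cover of $L_{6,13}$ does not work: its center is $\langle x_6\rangle$ and the quotient $\langle x_1,\dots,x_5,x_7\mid [x_1,x_2]=x_3,\ [x_1,x_3]=[x_2,x_4]=x_5\rangle$ has class $3$. Indeed Lemma~\ref{ggg} already tells you that the central quotient of \emph{every} stem Lie algebra of class $4$ with three-dimensional derived subalgebra is $L_{4,3}\oplus A$ or $L_{5,5}\oplus A$, so no algebra appearing in Theorem~\ref{lkl} can serve as a cover of a class-$4$ algebra.

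What is needed is a cover of class $5$, in which the new one-dimensional center sits at the top of the lower central series rather than inside the old $L^4$. This is what the paper constructs for $L_{6,13}$: $H=\langle x_1,\dots,x_7\mid [x_1,x_2]=x_3,\ [x_1,x_3]=x_4=[x_2,x_7],\ [x_1,x_4]=x_5=[x_3,x_7],\ [x_1,x_5]=x_6=[x_4,x_7]\rangle$, with $Z(H)=\langle x_6\rangle$ and $H/\langle x_6\rangle\cong L_{6,13}$ after the relabelling $x_7\mapsto x_4$, $x_4\mapsto x_5$, $x_5\mapsto x_6$; note $\dim H^2=4$ and $cl(H)=5$, so $H$ lies outside the family being classified. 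Analogous class-$5$, seven-dimensional covers are needed for $L_{6,11}$ and $L_{6,12}$. Your fallback suggestion via the exterior center (showing $x_6\wedge x_1\neq 0$ in $L\wedge L$) is a legitimate alternative, but as written it is only a plan; the cover route is the one to salvage, and it goes through once the covers are corrected as above.
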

\begin{proof}
By using a terminology of  \cite{cic,Gr}, we have $L_{6,15}=\langle x_1,\ldots,x_6| [x_1, x_2] = x_3, [x_1, x_3] = x_4, [x_1, x_4] = x_5=[x_2, x_3], [x_1, x_5] = x_6=[x_2, x_4]\rangle $ and
$L_{6,18}=\langle x_1,\ldots,x_6| [x_1, x_2] = x_3, [x_1, x_3] = x_4, [x_1, x_4] = x_5, [x_1, x_5] = x_6\rangle.$
It is clear to see that $Z(L_{6,15})=\langle x_6\rangle $ and $Z(L_{6,18})=\langle x_6\rangle,  $ so $L_{6,15} /\langle x_6\rangle\cong L_{5,6}$  and $L_{6,18}/\langle x_6\rangle\cong L_{5,7}.$ Hence they are capable. Consider $ H= \langle x_1,\ldots,x_7| [x_1, x_2] = x_3, [x_1, x_3] =x_4=[x_2,x_7],[x_1, x_4]=x_5=[x_3,x_7], [x_1,x_5]=x_6=[x_4,x_7]\rangle. $ Clearly, $ Z(H)= \langle x_6\rangle.$ Hence
$ L_{6,13}\cong H /\langle x_6\rangle.$ Therefore $L_{6,13}$ is capable. The capability of $L_{6,11}$ and $L_{6,12}$ are obtained by a similar way. Hence 
 the proof is completed.
\end{proof}
The following lemma  is a useful instrument in the next investigations.

\begin{lem}\label{4869}
The Lie algebra $L= \langle x_1,x_2,\ldots,x_{6},z|[x_1,x_i]=x_{i+1},[x_{2},x_{6}]=x_{4},[x_3,x_6]=[x_2,z]=[x_{6},z]=x_{5},1\leq i\leq 4\rangle $
is non-capable.
\end{lem}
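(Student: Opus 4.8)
The plan is to show that the epicenter $Z^*(L)$ is non-trivial; by the result of Niroomand et al.\ recalled in the preliminaries, $Z^*(L)=Z^{\wedge}(L)$, so it suffices to exhibit a nonzero element $l\in L$ with $l\wedge l'=0_{L\wedge L}$ for every $l'\in L$. The natural candidate is a generator of the last term of the lower central series. First I would compute the lower central series of $L$ explicitly from the presentation: with $[x_1,x_i]=x_{i+1}$ for $1\le i\le 4$, together with $[x_2,x_6]=x_4$, $[x_3,x_6]=x_5$, $[x_2,z]=x_5$ and $[x_6,z]=x_5$, one checks that $L^2=\langle x_3,x_4,x_5,x_6\rangle$ (so $\dim L^2=4$, not $3$ — note this algebra is an auxiliary object, not one of the classified ones), $L^3=\langle x_4,x_5\rangle$, $L^4=\langle x_5\rangle$ and $L^5=0$; in particular $x_5\in Z(L)$. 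The key step is then to verify that $x_5\in Z^{\wedge}(L)$.

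To do this I would use the standard fact that in $L\wedge L$ one has $x\wedge y = 0$ whenever $x\in L^i$, $y\in L^j$ with $i+j$ exceeding the nilpotency class, and more generally the bilinearity and the defining relations of the exterior square (antisymmetry, and $x\wedge[y,z]=[x,y]\wedge z + y\wedge[x,z]$-type identities coming from the Jacobi relation in $\wedge$). Concretely, $x_5$ lies in $L^4$, and since $L$ has class $5$ (because $x_5\neq 0$ and $L^5=0$ but $L^4\ne 0$), one has $x_5\wedge y$ potentially nonzero only when $y$ is not in $L^2$, i.e.\ only for $y$ in the span of $x_1,x_2$ modulo $L^2$. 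So the whole verification reduces to showing $x_5\wedge x_1 = 0$ and $x_5\wedge x_2 = 0$ in $L\wedge L$. For the first, write $x_5=[x_1,x_4]$; then $x_5\wedge x_1 = [x_1,x_4]\wedge x_1 = -[x_1,x_1]\wedge x_4 - x_1\wedge[x_4,x_1] = x_1\wedge[x_1,x_4] = x_1\wedge x_5$, which combined with antisymmetry $x_1\wedge x_5 = -x_5\wedge x_1$ forces $x_5\wedge x_1=0$. For the second, one has two expressions $x_5=[x_3,x_6]$ and $x_5=[x_6,z]$ to play with; using $x_5=[x_3,x_6]$ gives $x_5\wedge x_2 = [x_3,x_6]\wedge x_2 = [x_2,x_3]\wedge x_6 + x_3\wedge[x_2,x_6] = [x_2,x_3]\wedge x_6 + x_3\wedge x_4$, and both summands vanish because $x_3\in L^2$, $x_4\in L^3$, $x_6\in L^2$ and the relevant products land deep enough in the lower central series (here one invokes that $[x_2,x_3]\in L^3$ so $[x_2,x_3]\wedge x_6\in L^3\wedge L^2 = 0$, and $x_3\wedge x_4\in L^2\wedge L^3 = 0$). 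Hence $x_5\wedge L = 0$.

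Therefore $0\ne x_5\in Z^{\wedge}(L) = Z^*(L)$, so $Z^*(L)\ne 0$, and by the characterization of capability via the epicenter, $L$ is non-capable. The main obstacle I anticipate is the bookkeeping in the second bullet: one must be careful that the exterior-square relations are applied correctly (the exterior square is a quotient of $L\wedge_{\mathrm{non-ab}}L$ by the relations $x\wedge x=0$ and the Jacobi-type identity), and that every term produced by expanding $x_5$ via a bracket genuinely sits in $L^i\wedge L^j$ with $i+j\ge 5$; a cleaner alternative that sidesteps this is to recognize $L$ as a central extension — i.e.\ find an explicit $H$ with $L\cong H/Z(H)$ is \emph{not} what we want; rather, to show non-capability directly, one could instead try to realize $\langle x_5\rangle$ as a direct summand complement situation and invoke Proposition~\ref{Hi}, but the exterior-center computation above is the most self-contained route.
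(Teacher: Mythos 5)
Your overall strategy --- exhibit $0\neq x_5\in Z^{\wedge}(L)=Z^{*}(L)$ --- is exactly the paper's, but the execution has several genuine gaps. First, you have misread the presentation: no defining relation produces $x_6$ as a bracket (that would require $[x_1,x_5]=x_6$, i.e.\ $i=5$, outside the stated range), so $L^2=\langle x_3,x_4,x_5\rangle$ has dimension $3$, the class is $4$ (consistent with $L^5=0\neq L^4$ in your own computation, and with the lemma's role in the paper), and $x_6$ and $z$ both lie \emph{outside} $L^2$. Consequently your reduction to ``only $x_5\wedge x_1$ and $x_5\wedge x_2$ need checking'' is false: $x_5\wedge x_6$ and $x_5\wedge z$ are not covered by any $L^i\wedge L^j$ vanishing and must be computed directly (the paper does both). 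Second, the vanishing criterion is misstated: from $L\wedge L\cong F^2/[R,F]$ one gets $L^i\wedge L^j=0$ only when $i+j\geq c+2$, not when $i+j>c$; with $c=4$ this does cover $x_5\wedge y$ for $y\in L^2$, but it does \emph{not} cover $x_3\wedge x_4\in L^2\wedge L^3$ (here $2+3=5<6$), which you dismiss without justification.

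Third, the two computations you do carry out fail. With the defining identity $[a,b]\wedge c=a\wedge[b,c]-b\wedge[a,c]$ (the one the paper uses; yours has the opposite sign and is incompatible with the commutator map $x\wedge y\mapsto[x,y]$ via the Jacobi identity), expanding $x_5\wedge x_1=[x_1,x_4]\wedge x_1$ returns $x_5\wedge x_1$ identically --- a tautology; with your sign it returns $-x_5\wedge x_1$, which only gives $2(x_5\wedge x_1)=0$ and is inconclusive over fields of characteristic $2$, while the paper works over an arbitrary field. Likewise $x_5\wedge x_2$ reduces to $\pm\, x_3\wedge x_4$, and $x_3\wedge x_4=[x_1,x_2]\wedge x_4$ reduces back to $\mp\, x_2\wedge x_5$, again yielding only $2(x_2\wedge x_5)=0$. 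The paper's trick, which you should adopt, is to choose for each generator $y$ an expression of $x_5$ as a bracket of elements commuting with $y$: expand $x_5=[x_6,z]$ when pairing against $x_1$ and $x_2$, and $x_5=[x_1,x_4]$ when pairing against $x_6$ and $z$; each expansion then terminates in brackets that are genuinely zero in $L$ instead of circling back to the element being computed.
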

\begin{proof}
Clearly, $ Z(L)=\langle x_5\rangle. $
We claim that $ x_{5}\in Z^{\wedge}(L). $  It is sufficient to see that $ x_i\wedge x_{5}= z\wedge x_{5}=0_{L\wedge L} $ for all $i, $  $1\leq i\leq 6.$ Since
\begin{align*}
&x_1\wedge x_{5}= x_1\wedge [x_{6},z]=[z,x_1]\wedge x_{6}-[x_{6},x_1]\wedge z=0_{L\wedge L},\\&
  x_3\wedge x_{5}=[x_1,x_{2}]\wedge x_{5}=x_1\wedge [x_2,x_{5}]-x_{2} \wedge [x_1,x_{5}]=0_{L\wedge L},\\&
   x_4\wedge x_{5}=[x_1,x_{3}]\wedge x_{5}=x_1\wedge [x_{3},x_{5}]-x_{3} \wedge [x_1,x_{5}]=0_{L\wedge L},\\& x_{6}\wedge x_{5}=x_{6}\wedge [x_1,x_{4}]=[x_4,x_{6}]\wedge x_1-[x_1,x_{6}]\wedge x_{4}=0_{L\wedge L},\\&
  z\wedge x_{5}=z\wedge [x_1,x_{4}]=[x_4,z]\wedge x_1-[x_1,z]\wedge x_{4}=0_{L\wedge L}
  \end{align*}
  and
  \begin{align*}
  & x_{2}\wedge x_{5}=x_{2}\wedge  [x_{6},z]=[z,x_{2}]\wedge x_{6}-[x_{6},x_2]\wedge z=-x_{5}\wedge x_{6}-\\&[x_{6},x_2]\wedge z=-x_4\wedge z=-[x_1,x_3]\wedge z=- x_1\wedge [x_3,z]+x_3\wedge [x_1,z]=0_{L\wedge L},
   \end{align*}
 $ x_{5}\in Z^{\wedge}(L),$ as required.
\end{proof}
Recall that a Lie algebra $L$ is called  unicentral if $Z^{*}(L)=Z(L).$
\begin{thm}\label{gk}
Let $ T $ be an $ n$-dimensional stem Lie algebra such that $cl(T)=4$  and $\dim T^2=3.$ Then $ T $ is non-capable if and only if $ n\geq 7.$
Moreover, $T $ is unicentral.
\end{thm}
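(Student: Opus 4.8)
The plan is to combine the classification in Theorem~\ref{lkl} with the capability results already established. The statement has two halves: first, that $T$ is non-capable precisely when $n \geq 7$; second, that $T$ is always unicentral, i.e. $Z^{*}(T) = Z(T)$.

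For the ``if'' direction I would proceed by inspecting the list in Theorem~\ref{lkl}. The stem Lie algebras with $n \leq 6$ are exactly $L_{5,6}$, $L_{5,7}$, $L_{6,11}$, $L_{6,12}$, $L_{6,13}$, and these are all capable by Lemma~\ref{cc1}; so capability forces $n \leq 6$, which gives one implication. For the ``only if'' direction, I must show that every $T$ in the list with $n \geq 7$ is non-capable. Here the list splits naturally: the algebras of type $(e),(f),(g),(h),(k),(l),(p),(q),(r)$ are central products of the form $A \dotplus B$ in which $A^2 \cap B^2 \neq 0$ — indeed in each such case the common socle $\langle x_5\rangle$ or $\langle x_6\rangle = Z(T) = T^4$ lies in both $A^2$ and $B^2$ (for the Heisenberg factor $H(m)$ its square is exactly $Z(T)$, and the maximal-class factor also contains $Z(T)$ in its derived algebra) — so Proposition~\ref{Hi} applies directly and yields non-capability, with moreover $Z(T) = T^4 \subseteq Z^{\wedge}(T)$. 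The remaining cases $(n)$ and $(r)$ (when $m=0$, i.e. the $7$-dimensional algebra $I_8 \rtimes A$ of type $(n)$) are not central products of that shape, so Proposition~\ref{Hi} does not apply; for these I would invoke Lemma~\ref{4869}, whose algebra is (up to renaming generators) exactly the $7$-dimensional stem algebra of type $(n)$, to conclude $Z(T) = \langle x_5\rangle \subseteq Z^{\wedge}(T)$ and hence $T$ non-capable.

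For the unicentral claim, the key observation is that for a stem algebra $T$ with $cl(T)=4$ and $\dim T^2 = 3$ we have $Z(T) = T^4 \cong A(1)$ by Proposition~\ref{134}, so $Z(T)$ is one-dimensional. Since always $Z^{*}(T) \subseteq Z(T)$, unicentrality is equivalent to $Z^{*}(T) = Z(T)$, i.e. $Z(T) \subseteq Z^{\wedge}(T)$ (using $Z^{*} = Z^{\wedge}$ from \cite{ni3}). When $n \geq 7$ this is exactly what the previous paragraph established. When $n \leq 6$, $T$ is capable so $Z^{*}(T) = 0$; but then $Z(T) = 0$ as well — wait, this needs care: a capable stem algebra still has $Z(T) = T^4 \neq 0$, so $Z^{*}(T) = 0 \neq Z(T)$, which would contradict unicentrality. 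So I should re-read: unicentrality must be claimed only in the non-capable range, or the statement intends $Z^{*}(T) = Z(T)$ to be read together with the non-capability. I would therefore present the unicentral conclusion as applying to the non-capable $T$ (i.e. $n \geq 7$), where the chain $Z(T) = T^4 \subseteq Z^{\wedge}(T) = Z^{*}(T) \subseteq Z(T)$ forces equality.

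The main obstacle is matching each algebra on the Theorem~\ref{lkl} list with the correct non-capability tool: verifying that the socle genuinely lies in $A^2 \cap B^2$ for every central-product case (a short but case-by-case bracket computation), and correctly identifying the algebra of Lemma~\ref{4869} with type $(n)$ after a change of basis. I expect the central-product cases to be routine via Proposition~\ref{Hi}, and the only genuinely delicate point to be the two ``genuinely indecomposable'' seven-dimensional algebras of types $(n)$ and $(r)|_{m=0}$, which is precisely why Lemma~\ref{4869} was proved in advance.
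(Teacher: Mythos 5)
Your proposal is correct and follows essentially the same route as the paper: Lemma~\ref{cc1} handles $n\leq 6$ via Theorem~\ref{lkl}, Proposition~\ref{Hi} disposes of all the central-product cases with $A^2\cap B^2=Z(T)=T^4\neq 0$, and Lemma~\ref{4869} (whose algebra is indeed type $(n)$ after the relabeling $x_4\mapsto x_5$, $x_5\mapsto x_6$, $x_6\mapsto x_4$, $z\mapsto x_7$) covers the one remaining seven-dimensional case. Your caveat about the unicentral claim is also well taken: since a capable stem $T$ has $Z^{*}(T)=0\neq Z(T)=T^4$, the assertion can only hold for $n\geq 7$, where $0\neq Z^{*}(T)\subseteq Z(T)\cong A(1)$ forces equality --- a point the paper's own proof passes over in silence.
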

\begin{proof}
Let  $ T $ be non-capable. Then  Theorem  \ref{lkl}  and Lemma \ref{cc1} imply $ n\geq 7.$ Conversely, let $ n\geq 7.$ Then by Proposition \ref{Hi}, Theorem  \ref{lkl} and Lemma \ref{4869},  $ T $ is non-capable. The result follows.
\end{proof}

In the  following theorem,  all capable stem Lie algebras of  class $ 4 $ with the  derived subalgebra of dimension $ 3$ are given.
\begin{thm}\label{p11}
  Let $ T $ be an $ n$-dimensional stem Lie algebra such that $cl(T)=4$  and $\dim T^2=3.$  Then $ T$ is capable if and only if $ T $ is isomorphic to one of the Lie algebras $L_{5,6},  $  $ L_{5,7}, $ $ L_{6,11}, $  $ L_{6,12} $ or
$ L_{6,13}. $
  \end{thm}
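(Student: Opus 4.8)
The plan is to combine the two characterizations already established earlier in the excerpt, so that Theorem \ref{p11} becomes essentially a corollary of Theorem \ref{gk}, Theorem \ref{lkl}, and Lemma \ref{cc1}. Recall that the classification Theorem \ref{lkl} lists every $n$-dimensional stem Lie algebra $T$ with $cl(T)=4$ and $\dim T^2=3$, and one checks from that list that the only algebras of dimension at most $6$ appearing are precisely $L_{5,6}$, $L_{5,7}$, $L_{6,11}$, $L_{6,12}$ and $L_{6,13}$; every other algebra in the list (cases $(e)$--$(h)$, $(k)$, $(l)$, $(n)$--$(r)$) has dimension $n\geq 7$. So the statement reduces to identifying the capable stem algebras with the low-dimensional ones.

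First I would prove the ``if'' direction: if $T$ is isomorphic to one of $L_{5,6}$, $L_{5,7}$, $L_{6,11}$, $L_{6,12}$, $L_{6,13}$, then $T$ is capable. This is exactly the content of Lemma \ref{cc1}, which was established by exhibiting, in each case, an explicit Lie algebra $H$ (namely $L_{6,15}$, $L_{6,18}$, and the three seven-dimensional covers written out there) with $H/Z(H)\cong T$; by definition this makes $T$ capable. So this half is immediate by citation.

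For the ``only if'' direction, suppose $T$ is capable. By Theorem \ref{gk}, a stem Lie algebra of class $4$ with $\dim T^2=3$ is non-capable whenever $n\geq 7$; contrapositively, capability of $T$ forces $n\leq 6$. Now invoke the classification Theorem \ref{lkl}: inspecting its list, the only algebras with $n\leq 6$ are exactly $L_{5,6}$ ($n=5$), $L_{5,7}$ ($n=5$), $L_{6,11}$, $L_{6,12}$ ($n=6$, cases $(c)$, $(d)$) and $L_{6,13}$ ($n=6$, case $(m)$). Hence $T$ is isomorphic to one of these five algebras, which completes the proof.

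There is no real obstacle here: everything of substance has already been done in Theorem \ref{48}, Proposition \ref{481} (feeding the classification Theorem \ref{lkl}), in Lemma \ref{4869} together with Proposition \ref{Hi} (feeding the non-capability half of Theorem \ref{gk}), and in the explicit covering constructions of Lemma \ref{cc1} (feeding the capability half). The only thing one must be careful about is the bookkeeping step of reading off Theorem \ref{lkl} that its dimension-$\leq 6$ members are precisely the five named algebras and nothing else, and that these five really do occur as stem algebras of class $4$ with derived subalgebra of dimension $3$ --- both of which are visible directly from the presentations. Thus the proof of Theorem \ref{p11} is a short synthesis: capability $\Longleftrightarrow$ $n\leq 6$ (Theorem \ref{gk}) $\Longleftrightarrow$ membership in the five-element list (Theorem \ref{lkl} and Lemma \ref{cc1}).
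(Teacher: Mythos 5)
Your proposal is correct and matches the paper's own argument, which likewise derives the theorem by combining Theorem \ref{lkl}, Lemma \ref{cc1} and Theorem \ref{gk} for the forward direction and cites Lemma \ref{cc1} for the converse. Your version merely spells out the bookkeeping (capability forces $n\leq 6$ by Theorem \ref{gk}, and the only members of the list in Theorem \ref{lkl} with $n\leq 6$ are the five named algebras) that the paper leaves implicit.
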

\begin{proof}
  Let $ T $ be  capable. By Theorem  \ref{lkl}, Lemma \ref{cc1}  and Theorem  \ref{gk}, $ T $  is isomorphic to one of the Lie algebras $L_{5,6},  $  $ L_{5,7}, $ $ L_{6,11}, $  $ L_{6,12} $ or
$ L_{6,13}. $ The converse holds by Lemma \ref{cc1}.
\end{proof}

The next theorem gives a necessary and sufficient condition for detecting  the capability of stem Lie algebras of class $ 4 $ with the derived subalgebra of dimension $ 3. $
\begin{thm}\label{fg}
Let $ T $ be an $ n$-dimensional stem Lie algebra such that $cl(T)=4$  and $\dim T^2=3.$ Then $ T$ is capable if and only if
$4\leq \dim (T/Z(T)) \leq 5.$
\end{thm}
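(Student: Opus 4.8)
The plan is to translate the hypothesis $4 \leq \dim(T/Z(T)) \leq 5$ into a bound on $n$ and then quote the capability classification already established. First I would invoke Proposition \ref{134}: since $T$ is a stem Lie algebra of class $c = 4$ with $\dim T^2 = 3 = c-1$, it gives $Z(T) = T^4 \cong A(1)$, hence $\dim Z(T) = 1$ and $\dim(T/Z(T)) = n - 1$. Thus the condition $4 \leq \dim(T/Z(T)) \leq 5$ is exactly the condition $5 \leq n \leq 6$.

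Next I would note that $cl(T) = 4$ forces $\dim T \geq 5$, so $n \geq 5$ holds automatically; the asserted equivalence therefore amounts to the statement that $T$ is capable if and only if $n \leq 6$. This is precisely the content of Theorem \ref{gk}, which states that $T$ is non-capable if and only if $n \geq 7$. Chaining the equivalences, $T$ is capable $\iff n \leq 6 \iff 5 \leq n \leq 6 \iff 4 \leq \dim(T/Z(T)) \leq 5$, which finishes the argument.

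There is essentially no obstacle here beyond bookkeeping, because the substantive work is already carried by Theorem \ref{gk} (resting in turn on Theorem \ref{lkl}, Lemma \ref{cc1}, Lemma \ref{4869} and Proposition \ref{Hi}). The one place where care is needed is the reduction step: one must be sure that $\dim Z(T) = 1$ for every member of the family, and this is guaranteed because $\dim T^2 = c - 1$ is exactly the hypothesis under which Proposition \ref{134} applies. Alternatively, one could bypass Theorem \ref{gk} and argue directly from Theorem \ref{lkl}, computing $\dim(T/Z(T)) = n - 1$ in each listed case and observing that $n \in \{5,6\}$ occurs exactly for $L_{5,6}, L_{5,7}, L_{6,11}, L_{6,12}, L_{6,13}$, then applying Theorem \ref{p11}; this route is longer but entirely self-contained.
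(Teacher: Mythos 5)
Your proof is correct and is essentially the route the paper takes: the paper's own proof is a one-line deduction citing Lemma \ref{ggg} and Theorem \ref{p11}, which amounts to the same bookkeeping you carry out via Proposition \ref{134} (giving $\dim Z(T)=1$, hence $\dim(T/Z(T))=n-1$) and Theorem \ref{gk}. Both arguments rest on the same underlying classification, so there is nothing substantive to add.
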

\begin{proof}
The result follows from Lemma \ref{ggg} and Theorem \ref{p11}.
\end{proof}
The following result shows that each finite dimensional nilpotent Lie algebra $L$ of class $4$ with the derived subalgebra of dimension $ 3 $ has a decomposition into a stem 
Lie algebra $ T $ of class $4$ with $\dim T^2 = 3$ and an abelian Lie algebra. Moreover, there is a nice relationship between   the capability of $L$ and $ T. $
\begin{thm}\label{mo}
 Let $L$ be a finite dimensional nilpotent Lie algebra of class $4$ and $\dim L^2 = 3.$ Then $L = T \oplus A$ such that
  $Z(T ) = L^ 2 \cap Z(L) = L^4 = T^4$ and $ Z^*(L) = Z ^* (T ),$ where $A$ is an abelian Lie algebra.
 \end{thm}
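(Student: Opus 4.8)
The plan is to identify $L^2\cap Z(L)$, split off a central abelian direct summand $A$, and then transfer the exterior centre across the resulting decomposition $L=T\oplus A$. The first step is immediate: since $cl(L)=4$ and $\dim L^2=3=cl(L)-1$, Proposition~\ref{13} with $i=1$ gives $L^2\cap Z(L)=L^4$, and since $\dim L^j=cl(L)-j+1$ for $2\le j\le cl(L)$ we get $L^4\cong A(1)$.

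For the decomposition I would choose a subspace $A\subseteq Z(L)$ with $Z(L)=L^4\oplus A$. Then $A\cap L^2\subseteq Z(L)\cap L^2=L^4$ together with $A\cap L^4=0$ forces $A\cap L^2=0$, so I can refine to a vector-space decomposition $L=L^2\oplus A\oplus V$ and set $T:=L^2\oplus V$. Because $L^2\subseteq T$, the subspace $T$ is an ideal, and since $A$ is central all cross brackets vanish, giving $T^i=L^i$ for every $i\ge 2$; hence $cl(T)=4$, $\dim T^2=3$, $T^4=L^4\ne 0$, and $L=T\oplus A$ as Lie algebras ($T\cap A=0$, $[T,A]=0$, $A$ abelian). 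Any $z\in Z(T)$ centralises both $T$ and $A$, so $z\in Z(L)\cap T$; writing $z=w+a$ with $w\in L^4\subseteq T$ and $a\in A$ forces $a\in T\cap A=0$, hence $Z(T)\subseteq L^4$, while $T^4\subseteq Z(T)$ because $T^5=0$. Thus $Z(T)=L^4=T^4=L^2\cap Z(L)$; in particular $Z(T)\subseteq T^2$, so by Proposition~\ref{134} $T$ is a stem Lie algebra of class $4$ with $\dim T^2=3$.

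It then remains to prove $Z^*(L)=Z^*(T)$, and by \cite{ni3} I may argue throughout with $Z^{\wedge}$, using only functoriality of the exterior square and the elementary fact that $u\wedge v=0_{L\wedge L}$ whenever $u\in L^2$ and $v\in Z(L)$ (expand $u$ as a sum of brackets and apply the identity $x\wedge[y,z]=[z,x]\wedge y-[y,x]\wedge z$ already used in Lemma~\ref{4869}). For $Z^{\wedge}(T)\subseteq Z^{\wedge}(L)$: any $t\in Z^{\wedge}(T)$ lies in $Z(T)=L^4\subseteq L^2$, so for $t'\in T$, $a'\in A$ one has $t\wedge a'=0$, and $t\wedge t'=0$ in $L\wedge L$ since it is the image of $t\wedge t'=0\in T\wedge T$ under the map induced by $T\hookrightarrow L$; hence $t\in Z^{\wedge}(L)$. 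For the reverse inclusion, take $l=t+a\in Z^{\wedge}(L)$ with $t\in T$, $a\in A$. The retraction $\pi\colon L\to T$, $t+a\mapsto t$, is a Lie homomorphism, and the induced map $L\wedge L\to T\wedge T$ sends $l\wedge l'$ to $t\wedge\pi(l')$, so $t\wedge t'=0$ in $T\wedge T$ for all $t'\in T$, i.e. $t\in Z^{\wedge}(T)$. Finally, abelianising and using that $L/L^2=(T/T^2)\oplus A$ has dimension $\ge 2$, the map $L\wedge L\to(L/L^2)\wedge(L/L^2)\cong\Lambda^2(L/L^2)$ gives $\overline{l}\wedge\overline{l'}=0$ for all $l'$, hence $\overline{l}=0$; therefore $l\in L^2\subseteq T$, so $a=l-t\in T\cap A=0$ and $l=t\in Z^{\wedge}(T)$.

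The decomposition and the computation of $Z(T)$ are routine; the point needing care is the reverse inclusion in the last step, namely forcing the $A$-component of a member of $Z^*(L)$ to vanish, since that is precisely where the exterior square has to detect the abelian summand. I would settle it by passing to $\Lambda^2(L/L^2)$ as above, which sidesteps invoking the full Künneth-type decomposition of the exterior square of a direct sum.
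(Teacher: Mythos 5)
Your proof is correct, but it takes a genuinely different route from the paper's: the paper disposes of the whole statement in two lines by citing \cite[Proposition 3.1]{pair}, which already supplies the decomposition $L=T\oplus A$ with $Z(T)=L^2\cap Z(L)$ and $Z^*(L)=Z^*(T)$ for an arbitrary finite-dimensional nilpotent Lie algebra, and then applies Proposition \ref{134} to identify $Z(T)$ with $T^4$ (you reach the same identification via Proposition \ref{13} with $i=1$). What you have written is in effect a self-contained re-proof of that cited proposition in the present special case: you construct $T$ explicitly as $L^2\oplus V$ after splitting $Z(L)=L^4\oplus A$, verify $Z(T)=T^4=L^4$ directly, and establish $Z^{\wedge}(L)=Z^{\wedge}(T)$ by hand, the one delicate point being the vanishing of the $A$-component of an element of $Z^{\wedge}(L)$, which you settle by passing to $\Lambda^2(L/L^2)$ and using $\dim(L/L^2)\geq 2$. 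All the exterior-square manipulations you invoke --- functoriality along the inclusion $T\hookrightarrow L$ and the retraction $\pi\colon L\to T$, the identity $u\wedge v=0$ for $u\in L^2$ and $v\in Z(L)$, and $M\wedge M\cong\Lambda^2 M$ for abelian $M$ --- are standard and consistent with what the paper itself uses in Lemma \ref{4869}, so I see no gap. The trade-off is the usual one: the paper's proof is shorter and leans on general machinery already published by the same authors, while yours is verifiable without consulting \cite{pair} and makes explicit the mechanism by which an abelian direct summand can never contribute to the exterior centre.
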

 \begin{proof}
 By using \cite[Proposition 3.1]{pair}, $L = T\oplus A$ such that $Z(T ) = L^ 2 \cap Z(L)$ and $Z^*(L) = Z^* (T ),$ where $A$ is an abelian Lie algebra. Since $T$ is stem, Proposition \ref{134} implies $Z(T ) = T^4,$ as required.
 \end{proof}
Now, we are in the position to determine all  capable Lie algebras $ L $  of class $4 $ whit $ \dim L^2=3. $
\begin{thm}\label{2611}
Let $ L $ be an $n$-dimensional Lie algebra    such that $cl(L)=4$  and $\dim L^2=3.$ Then $L $ is capable if and only if $ L $ is isomorphic to one of the Lie algebras
 $L_{5,6}\oplus A(n-5),  $  $ L_{5,7}\oplus A(n-5), $ $ L_{6,11}\oplus A(n-6), $  $ L_{6,12}\oplus A(n-6) $ or
$ L_{6,13}\oplus A(n-6). $
\end{thm}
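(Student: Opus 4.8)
The plan is to reduce the final classification to the stem case already resolved in Theorem~\ref{p11}, using the decomposition result of Theorem~\ref{mo} as the bridge. First I would invoke Theorem~\ref{mo}: since $cl(L)=4$ and $\dim L^2=3$, we may write $L = T\oplus A$ with $A$ abelian, where $T$ is a stem Lie algebra satisfying $Z(T)=L^2\cap Z(L)=L^4=T^4$ and, crucially, $Z^*(L)=Z^*(T)$. Because the derived subalgebra is unaffected by splitting off an abelian direct summand, we have $T^2=L^2$, so $\dim T^2=3$ and $cl(T)=4$; thus $T$ falls under the scope of Theorem~\ref{p11}.

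Next I would exploit the identity $Z^*(L)=Z^*(T)$ together with the characterization of capability via the epicenter: $L$ is capable if and only if $Z^*(L)=0$, equivalently $Z^*(T)=0$, equivalently $T$ is capable. So $L$ is capable precisely when its stem factor $T$ is capable. By Theorem~\ref{p11}, this happens if and only if $T$ is isomorphic to one of $L_{5,6}$, $L_{5,7}$, $L_{6,11}$, $L_{6,12}$, or $L_{6,13}$. Writing $n=\dim L$ and recording the dimension of the abelian complement in each case ($A\cong A(n-5)$ when $T$ has dimension $5$, and $A\cong A(n-6)$ when $T$ has dimension $6$) yields exactly the list
\[
L_{5,6}\oplus A(n-5),\quad L_{5,7}\oplus A(n-5),\quad L_{6,11}\oplus A(n-6),\quad L_{6,12}\oplus A(n-6),\quad L_{6,13}\oplus A(n-6).
\]

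For the converse direction I would argue that each algebra on the list is capable: its stem factor is one of the five capable algebras of Lemma~\ref{cc1}, and adding an abelian direct summand preserves capability (again by $Z^*(L)=Z^*(T)$, or directly since $H/Z(H)\cong T$ implies $(H\oplus A)/Z(H\oplus A)\cong T\oplus A$ when $A$ is abelian and we enlarge $H$ suitably). This completes both implications.

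I do not anticipate a serious obstacle here — the theorem is essentially a packaging of Theorem~\ref{mo} and Theorem~\ref{p11}. The one point requiring a little care is checking that the stem factor $T$ genuinely satisfies the hypotheses of Theorem~\ref{p11}, i.e.\ that $cl(T)=4$ and $\dim T^2=3$ rather than something smaller; this follows because an abelian direct summand contributes nothing to the lower central series, so $T^i=L^i$ for all $i\ge 2$, giving $\dim T^2=\dim L^2=3$ and $T^4=L^4\neq 0$. The bookkeeping of the dimension of $A$ in each of the five cases is the only remaining routine verification.
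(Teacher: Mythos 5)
Your proposal is correct and follows essentially the same route as the paper: decompose $L=T\oplus A$ via Theorem \ref{mo}, use $Z^*(L)=Z^*(T)$ to transfer capability to the stem factor, and then apply Theorem \ref{p11}. The extra checks you spell out (that $T^2=L^2$ and $cl(T)=4$, and the dimension bookkeeping for $A$) are exactly the routine verifications the paper leaves implicit.
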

\begin{proof}
By using Theorem \ref{mo}, $ L=T\oplus A $ such that  $ Z(T) =L^2\cap Z(L)=L^4=T^4\cong A(1)$  and  $Z^{*}(L)=Z^{*}(T), $ where $ A $ is an abelian Lie algebra. Now, the result follows from Theorem \ref{p11}.
\end{proof}
The following result is obtained from Theorems \ref{fg} and \ref{2611}.
\begin{cor}\label{fg1}
Let $ L$ be a finite dimensional Lie algebra  of class $4  $ and $\dim L^2=3.$ Then $ L$ is capable if and only if
$4\leq \dim (L/Z(L)) \leq 5.$
\end{cor}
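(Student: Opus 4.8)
The plan is to deduce the corollary from Theorem~\ref{fg} by means of the structural decomposition of Theorem~\ref{mo}, thereby reducing the general case to the stem case. First I would apply Theorem~\ref{mo} to write $L = T \oplus A$ with $A$ abelian and $Z^{*}(L) = Z^{*}(T)$; since adjoining an abelian direct summand changes neither the derived subalgebra nor the nilpotency class, $T$ is a stem Lie algebra with $cl(T) = 4$ and $\dim T^2 = 3$. Because a finite dimensional Lie algebra is capable exactly when its epicenter vanishes, the equality $Z^{*}(L) = Z^{*}(T)$ shows that $L$ is capable if and only if $T$ is capable.

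Next I would establish the dimension identity $\dim(L/Z(L)) = \dim(T/Z(T))$. The point is that $Z(L) = Z(T) \oplus A$: the inclusion $Z(T) \oplus A \subseteq Z(L)$ is immediate since $[T,A]=0$, while any element of $Z(L)$, written according to $L = T \oplus A$, must have its $T$-component central in $T$, so $Z(L) \subseteq Z(T) \oplus A$. Consequently
\[
\dim(L/Z(L)) = \dim L - \dim Z(L) = \dim T - \dim Z(T) = \dim(T/Z(T)).
\]

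Finally I would invoke Theorem~\ref{fg}, which asserts that $T$ is capable if and only if $4 \le \dim(T/Z(T)) \le 5$. Chaining the three equivalences yields: $L$ is capable $\iff$ $T$ is capable $\iff$ $4 \le \dim(T/Z(T)) \le 5$ $\iff$ $4 \le \dim(L/Z(L)) \le 5$, which is precisely the claim. As an alternative route bypassing Theorem~\ref{fg}, one may argue directly from Theorem~\ref{2611}: the five algebras $L_{5,6}$, $L_{5,7}$, $L_{6,11}$, $L_{6,12}$, $L_{6,13}$ each have a one-dimensional centre, so $\dim(L/Z(L))$ equals $4$ for $L_{5,6} \oplus A(n-5)$ and $L_{5,7} \oplus A(n-5)$ and equals $5$ for the remaining three (the abelian summand contributing equally to $L$ and to $Z(L)$); this settles one implication, and the converse is again the reduction through Theorem~\ref{mo}. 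I do not anticipate a genuine obstacle: the only delicate points are the identity $Z(L) = Z(T) \oplus A$ and the preservation of the epicenter under the decomposition, the latter being already part of the statement of Theorem~\ref{mo}.
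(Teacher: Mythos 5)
Your proposal is correct and follows essentially the same route as the paper, which derives the corollary by reducing to the stem case via the decomposition $L=T\oplus A$ and then applying the stem criterion (Theorem \ref{fg}) together with the classification in Theorem \ref{2611}. The only added value in your write-up is that you make explicit the identities $Z(L)=Z(T)\oplus A$ and $\dim(L/Z(L))=\dim(T/Z(T))$, which the paper leaves implicit.
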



\begin{thebibliography}{99}
\bibitem{3}
F. R. Beyl, U. Felgner and P. Schmid, On groups occurring as center factor groups, J. Algebra 61 (1970) 161--177.
\bibitem{bl}
N. Blackburn, On a special class of $ p$-groups, Acta Math. 100 (1958), no. 1--2, 45--92.
\bibitem{bos} L. Bosko, On Schur multiplier of Lie algebras and groups of maximal class, Internat. J. Algebra Comput. 20 (2010) 807--821.
\bibitem{cic} S. Cicalò, W. A. de Graaf and C. Schneider, Six-dimensional nilpotent Lie algebras, Linear Algebra Appl. 436 (2012), no. 1, 163--189.
\bibitem{el}
G. Ellis, A non-abelian tensor product of Lie algebras, Glasg. Math. J. 39 (1991) 101--120.
\bibitem{Gr2}M. P.  Gong, Classification  of nilpotent Lie Algebras of dimension $7$ (over Algebraically closed fields and R),
A thesis in Waterloo, Ontario, Canada, 1998.
\bibitem{Gr}W. A. de Graaf, Classification of $6$-dimensional nilpotent Lie algebras over fields of characteristic not $2$,
 J. Algebra 309 (2007) 640--653.
\bibitem{pair}F. Johari, M. Parvizi and P. Niroomand, Capability and Schur multiplier of a pair of Lie algebras, J. Geom. Phys. 114 (2017), 184--196
\bibitem{mon} K. Moneyhun, Isoclinisms in Lie algebras, Algebras Groups Geom. 11 (1994), no.1, 9--22.
\bibitem{ni54}P. Niroomand, On the dimension of the Schur multiplier of nilpotent Lie algebras, Cent. Eur. J. Math. 9 (2011) 57--64.
\bibitem{ni3} P. Niroomand, M. Parvizi and F. G. Russo, Some criteria for detecting capable Lie algebras, J. Algebra 384 (2013) 36--44.
\bibitem{ni4}P. Niroomand, F. Johari and M. Parvizi, On the capability and Schur multiplier of nilpotent Lie
algebra of class two, Proc. Amer. Math. Soc. 144 (2016), no. 10, 4157--4168.
\bibitem{ni41}P. Niroomand, F. Johari and M. Parvizi, Capable  Lie
algebras with the derived subalgebra  of dimension  two over an arbitrary filed,  Linear and Multilinear Algebra, DOI: 10.1080/03081087.2018.1425356.
\bibitem{alam} A. R. Salemkar, V. Alamian and  H. Mohammadzadeh,  Some properties of the Schur multiplier and covers of Lie Algebras, Comm. Algebra 36 (2008) 697--707.
\bibitem{zac}L. M. Zack, Nilpotent Lie algebras with a small second derived quotient, Comm. Algebra 36 (2008), no. 12, 4607--4619.
\end{thebibliography}
\end{document}